\theoremstyle{plain}
\numberwithin{equation}{section}
\newtheorem{theorem}{Theorem}[section]
\newtheorem{lemma}[theorem]{Lemma}
\newtheorem{corollary}[theorem]{Corollary}
\newtheorem{proposition}[theorem]{Proposition}
\newtheorem{remark}[theorem]{Remark}
\newcommand{\ze}{\zeta}
\newcommand{\om}{\omega}
\newcommand{\Card}{\mathsf{Card}}
\newcommand{\ii}{\mathsf{i}}
\newcommand{\ud}{\mathsf{d}}
\title[The counting problem for harmonic trinomials]{On the number of roots for harmonic trinomials}
\author{Gerardo Barrera}
\address{University of Helsinki, Department of Mathematics and Statistics.
P.O. Box 68, Pietari Kalmin katu 5, FI-00014. Helsinki, Finland.}
\email{gerardo.barreravargas@helsinki.fi}
\thanks{*Corresponding author: Gerardo Barrera.}
\author{Waldemar Barrera}
\address{Facultad de Matem\'aticas, Universidad Aut\'onoma de Yucat\'an. Anillo Perif\'erico Norte Tablaje CAT 13615, M\'erida, Yucat\'an, M\'exico.}
\email{bvargas@correo.uady.mx}
\author{Juan Pablo Navarrete}
\address{Facultad de Matem\'aticas, Universidad Aut\'onoma de Yucat\'an. Anillo Perif\'erico Norte Tablaje CAT 13615, M\'erida, Yucat\'an, M\'exico.}
\email{jp.navarrete@correo.uady.mx }
\subjclass{Primary 12D10, 30C15}
\keywords{Bohl's Theorem; Fundamental Theorem of Algebra; Harmonic Trinomials; Localization; Wilmshurst's Conjecture;  Zeros of Harmonic Trinomials}
\begin{document}
\begin{abstract}
In this manuscript we study the counting problem for harmonic trinomials of the form $a\zeta^n+b\overline{\zeta}^m+c$, where $n,m\in \mathbb{N}$, $n>m$, and $a$, $b$ and $c$ are non-zero complex numbers. 
As a consequence, we obtain the Fundamental Theorem of Algebra and the Wilmshurst conjecture for harmonic trinomials.
The proof of the counting problem relies on the Bohl method introduced in \cite{Bohl}.
\end{abstract}
\maketitle
\section{Introduction, main result and its consequences}
The computation and the understanding of the roots of a given polynomial of degree $n$ is at the heart of many problems in pure mathematics and mathematical modeling.
One of the most important and difficult problem of research in Complex Analysis is the localization of the roots of a generic polynomial of degree $n\geq 5$. In other words, 
given a generic polynomial $p$ of degree $n\geq 5$, localization refers to estimate the minimum radius $R$ such that all the roots of $p$ belong to the ball of radius $R$ centered at the origin. For further details we refer to 
\cite{Mardenbook}.
Given $r>0$ and a function $\varphi:\mathbb{C}\to \mathbb{C}$, we define the set
\begin{equation}\label{def:set}
\mathcal{Z}_{\varphi}(r):= \{\zeta\in \mathbb{C}:\quad \varphi(\zeta)=0\quad \textrm{ and } \quad|\zeta|<r\},
\end{equation}
where $|\cdot|$ denotes the complex modulus. We  denote the cardinality of a given set $X$ by $\Card(X)$ and for short we write 
$\Card_{\mathbb{Z}}(X)$ in place of $\Card(X\cap \mathbb{Z})$.
 
An important family of polynomials are the so-called trinomials of degree $n$.  
That is to say, polynomials  $h:\mathbb{C}\longrightarrow \mathbb{C}$ of the form
\begin{equation}\label{def:g}
h(\zeta):=a\ze^n+b{\ze}^m+c,
\end{equation}
where $a$, $b$ and $c$ are non-zero complex numbers, and $m$ and $n$ are positive integers satisfying $n>m$.
There are vast literature about the behavior of the roots for such polynomials.
For instance, it is well-known that the zeros are inside an annulus where the minor radius and the major radius depend only on the quantities $n$, $m$, $|a|$, $|b|$ and $|c|$, see 
\cite{Bohl,Botta,BrilleslyperSchaubroeck2018,
BrilleslyperSchaubroeck2014,Fell,Howell,Kuruklis,Melman,Nekrassoff,Szabo} and the references therein.
P. Bohl in \cite{Bohl} solves the localization and counting problem for $h$ in a ball of radius $r$ centered at the origin  according to whether the
numbers $|a|r^n$, $|b|r^m$
and $|c|$ are the lengths  of  the sides of some triangle (it may be degenerate) or not, and it reads as follows:

\begin{theorem}[Bohl's Theorem for trinomials  
\cite{Bohl}]
\hfill

\noindent
Let $r>0$ and assume that $|a|r^n$, $|b|r^m$
and $|c|$ are the side lengths of some triangle (it may be degenerate). Let $\om_1$ and $\om_2$ be the opposite angles to the sides with lengths $|a|r^n$ and $|b|r^m$, respectively. 
Then 
\[
\Card(\mathcal{Z}_{h}(r))=\Card_{\mathbb{Z}}((P-\omega(r),P+\omega(r))),
\]
where $h$ is given in \eqref{def:g},
\[
P:=\frac{n(\beta-\gamma+\pi)-m(\alpha-\gamma+\pi)}{2\pi},\quad \quad
\omega(r):=\frac{n\om_1+m\om_2}{2\pi},
\]
and $\alpha$, $\beta$, $\gamma$ are the arguments of $\textrm{a}$, $\textrm{b}$, $\textrm{c}$, respectively.
Moreover, when $|a|r^n$, $|b|r^m$
and $|c|$ are not the side lengths of any triangle, then 
\begin{equation}
\Card(\mathcal{Z}_{h}(r))=
\begin{cases}
0 & \textrm{ if }\quad |c|>|a|r^n+|b|r^m,\\
m & \textrm{ if }\quad |b|r^m>|a|r^n+|c|,\\
n & \textrm{ if }\quad |a|r^n>|b|r^m+|c|.
\end{cases}
\end{equation}
\end{theorem}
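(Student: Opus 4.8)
The plan is to count the zeros of $h$ in $\{|\zeta|<r\}$ by the argument principle and to evaluate the winding of $\theta\mapsto h(re^{i\theta})$ around $0$ via P. Bohl's geometric reading of this curve. Since $c\neq 0$ we have $h(0)\neq 0$, so whenever $h$ does not vanish on $\{|\zeta|=r\}$ the count $\Card(\mathcal{Z}_h(r))$ equals the winding number $\tfrac{1}{2\pi}\,\Delta_{\theta\in[0,2\pi]}\ar h(re^{i\theta})$; and when $h$ does vanish somewhere on that circle, the curve passes through the origin, the winding number is meaningless, and the offending zero must be excluded from $\mathcal{Z}_h(r)$ by hand. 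The presence of the \emph{open} interval in the statement already foreshadows that this latter situation occurs exactly when $P-\omega(r)$ or $P+\omega(r)$ is an integer, so that evaluating the winding number away from those special radii, together with a continuity argument in $r$, will yield the claimed formula everywhere; it is moreover convenient to reduce at the outset to $\gcd(n,m)=1$ by the substitution $\zeta\mapsto\zeta^{\gcd(n,m)}$, which leaves the triangle $|a|r^n,|b|r^m,|c|$ unchanged.

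When $|a|r^n,|b|r^m,|c|$ are not the sides of a triangle, exactly one of the monomials $a\zeta^n$, $b\zeta^m$, $c$ strictly dominates the sum of the moduli of the other two everywhere on $\{|\zeta|=r\}$; Rouch\'{e}'s theorem then shows that $h$ has the same number of zeros in $\{|\zeta|<r\}$ as that monomial, i.e.\ $n$, $m$ or $0$ according to whether $|a|r^n$, $|b|r^m$ or $|c|$ is dominant, and the strict inequalities also forbid zeros on the circle. This disposes of the non-triangle case.

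The heart of the matter is the triangle case. Writing $\zeta=re^{i\theta}$,
\[
h(re^{i\theta})=|a|r^n e^{i(\alpha+n\theta)}+|b|r^m e^{i(\beta+m\theta)}+|c|e^{i\gamma},
\]
so at each $\theta$ the value $h(re^{i\theta})$ is the vector sum of three vectors of \emph{fixed} lengths $|a|r^n$, $|b|r^m$, $|c|$ whose arguments advance linearly with angular speeds $n$, $m$, $0$. Such a sum vanishes precisely when these vectors close up into a (possibly degenerate) triangle; and since their lengths are by hypothesis the sides of the triangle with angles $\omega_1,\omega_2,\omega_3=\pi-\omega_1-\omega_2$ opposite to $|a|r^n,|b|r^m,|c|$, the law of cosines pins the arguments down, up to the two chiralities of that triangle, to
\[
\alpha+n\theta\equiv\gamma\pm(\pi-\omega_2),\qquad \beta+m\theta\equiv\gamma\mp(\pi-\omega_1)\pmod{2\pi},
\]
the two signs being correlated. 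I would then follow the continuous branch of $\ar h(re^{i\theta})$ over $[0,2\pi]$: on the long arcs where one monomial dominates the other two, the increment of the argument is read off directly from that monomial, whereas the curve can meet or nearly meet the origin only on the complementary short arcs, over which the net contribution to the winding is controlled exactly by the two congruences above. Grouping these passages by the integer index forced by $2\pi$-periodicity, one index per branch of the zero set, converts the total winding into a count of integers in the interval of half-width $\omega(r)=\tfrac{1}{2\pi}(n\omega_1+m\omega_2)$ centred at the phase $P=\tfrac{1}{2\pi}\bigl(n(\beta-\gamma+\pi)-m(\alpha-\gamma+\pi)\bigr)$ accumulated by the two rotating terms.

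The step I expect to be the main obstacle is exactly this last conversion, turning the local data ``the curve touches or skirts $0$ only at the solutions of the two congruences'' into the precise value of the winding number, and with it the careful handling of (i) the $\theta$'s solving one congruence but not the other, which are not zeros yet still contribute to the change of argument, (ii) the degenerate triangles $\omega_i\in\{0,\pi\}$ at the ends of the admissible range of $r$, and (iii) the radii with $P\pm\omega(r)\in\mathbb{Z}$, at which a genuine zero lies on $\{|\zeta|=r\}$ and so must be dropped, which is exactly what forces the interval in the statement to be open. Once the winding number is identified, off these radii, with $\lfloor P+\omega(r)\rfloor-\lceil P-\omega(r)\rceil+1=\Card_{\mathbb{Z}}\bigl((P-\omega(r),P+\omega(r))\bigr)$, the continuity argument in $r$ extends the identity to all admissible $r$, with the values $0$, $m$, $n$ from the non-triangle case serving as the boundary data that the formula must reproduce.
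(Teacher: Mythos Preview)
The paper does not prove this theorem; it is stated as a classical result attributed to Bohl (1908), so there is no proof in the paper to compare against directly. However, the paper carries out Bohl's method in full detail for the harmonic analogue (Theorem~\ref{th:bohlharm}), and that allows a meaningful comparison.

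Your plan is to compute the winding number of $\theta\mapsto h(re^{i\theta})$ around $0$ via the argument principle. Bohl's method, as reproduced in Section~\ref{sec:bohlmethod} for the harmonic case, does not compute a winding number at all. Instead it (i) shows directly that a root of modulus exactly $r$ exists if and only if $P-\omega(r)$ or $P+\omega(r)$ is an integer---this comes from the very congruences you wrote down, and is the content of Propositions~\ref{prop:pomega} and~\ref{prop:vuelta} in the harmonic setting; (ii) proves that $\omega$ is monotone on the triangle region (the analogue of Lemma~\ref{lem:omegaderivada}); and (iii) counts the roots by tracking how many integers the growing interval $(P-\omega(r),P+\omega(r))$ has swept through as $r$ increases, with the Rouch\'e values $0,m,n$ from the non-triangle regime serving as boundary anchors. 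Each distinct modulus of a root corresponds to one integer entering the interval, and the count drops out.

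Your reduction to $\gcd(n,m)=1$, your Rouch\'e argument for the non-triangle case, and your congruence analysis at a zero are all correct and are the same ingredients Bohl uses; what differs is the organizing principle for the final count. The step you yourself flag as the main obstacle---converting the near-origin passages of the curve $\theta\mapsto h(re^{i\theta})$ into an exact winding number---is a genuine gap in your sketch, and it is precisely what Bohl's route bypasses: by parametrizing over $r$ rather than over $\theta$, one never has to follow a continuous branch of $\arg h$ or separate grazing passes from actual zeros; the monotonicity of $\omega$ does the bookkeeping. An argument-principle proof can certainly be made to work, but as written your proposal stops short of the computation that would pin down the winding number as $\lfloor P+\omega(r)\rfloor-\lceil P-\omega(r)\rceil+1$.
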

In this manuscript, we study the 
localization problem for harmonic trinomials $f:\mathbb{C}\longrightarrow \mathbb{C}$ of the form
\begin{equation}\label{eq:armonicodoble}
f(\zeta):= a\ze^n+b\overline{\ze}^m+c, 
\end{equation}
where $a,b,c$ are  non-zero complex numbers, $n,m\in \mathbb{N}$ are positive integers, and 
$\overline{\zeta}$ denotes the complex conjugate of $\zeta$. Along this manuscript we always assume that $n>m$.
It is well-known 
that~\eqref{eq:armonicodoble} has at most $n^2$ roots, see for instance 
Theorem~5 of \cite{Wilmshurst}.
Moreover, there are harmonic polynomials with exactly $n^2$ roots, see Section~2 in \cite{Bshouty} or p.~2080 of  \cite{Wilmshurst}.

Our aim is to analyze the behavior of the non-decreasing function
\begin{equation}\label{eq:card}
(0,\infty)\ni r\longmapsto \Card(\mathcal{Z}_{f}(r))\in \{0,1,\ldots,n^2\},
\end{equation}
where $f$ is given in \eqref{eq:armonicodoble} and $\mathcal{Z}_{f}(r)$ is defined in \eqref{def:set}.

In \cite{BRILLESLYPER}, the authors provide a way to count the roots for the family of harmonic trinomials when $a=1$, $c=-1$ and $b\in (0,\infty)$. Moreover, the maximum number of roots for this family is $n+2m$, see Theorem~1.1 in \cite{BRILLESLYPER}.
In \cite{Brooks}, the previous result from \cite{BRILLESLYPER} is extended to cover the case when $b\in \mathbb{C}$.
As a consequence of our main result, Theorem~\ref{th:bohlharm}, we obtain Corollary~\ref{cor:FTA} which yields that any harmonic trinomial has at most $n+2m$ roots.

We also address the Wilmshurst conjecture for the family of harmonic trinomials.  Wilmshurst's conjecture is established for harmonic polynomials of the type $h(\zeta)=p(\zeta)-\overline{q(\zeta)}$, where $p$ and $q$ are complex polynomials of degree $n$ and $m$, respectively, with $n>m$.
Wilmshurst's conjecture states that the maximum number of roots for such $h$ is bounded by $3n-2+m(m-1)$. 
In \cite{Khavinson}, the authors show that there exists a harmonic polynomial $h$ with at least $3n-2$ zeros.
As a consequence of Theorem~\ref{th:bohlharm} below, we obtain that Wilmshurst's conjecture holds true for harmonic trinomials when $n>2$ and $n>m$. 
Moreover, we obtain the existence of a family of harmonic trinomials with exactly $3n-2$ roots,
see Corollary~\ref{cor:conjec} below.

The main theorem of this manuscript is the following.
\begin{theorem}[Bohl's Theorem for harmonic trinomials]\label{th:bohlharm}
\hfill

\noindent
Let $r>0$ and assume that $|a|r^n$, $|b|r^m$
and $|c|$ are the side lengths of some triangle (it may be degenerate). Let $\om_1$ and $\om_2$ be the opposite angles to the sides with lengths $|a|r^n$ and $|b|r^m$, respectively. 
Then 
\begin{equation}\label{eq:formula}
\Card(\mathcal{Z}_{f}(r))=\Card_{\mathbb{Z}}(P_*-\om_*(0,r))+\Card_{\mathbb{Z}}(P_*+\om_*(0,r)),
\end{equation}
where $f$ is defined in \eqref{eq:armonicodoble},
\begin{equation}\label{eq:pwdoble}
P_*:=\frac{n(\beta-\gamma-\pi)+m(\alpha-\gamma-\pi)}{2\pi},
\qquad
\om_*(r):= \frac{n\om_1-m\om_2}{2\pi},
\end{equation}
\[
P_*-\om_*(0,r):=\{P_*-\om_*(u):u\in (0,r)\},
\]
\[
P_*+\om_*(0,r):=\{P_*+\om_*(u):u\in (0,r)\},
\]
and $\alpha$, $\beta$, $\gamma$ are the arguments of $\textrm{a}$, $\textrm{b}$, $\textrm{c}$, respectively.
Moreover, when $|a|r^n$, $|b|r^m$
and $|c|$ are not the side lengths of any triangle, then 
\begin{equation}\label{eq:casos123}
\Card(\mathcal{Z}_{f}(r))=
\begin{cases}
0 & \textrm{ if }\quad |c|>|a|r^n+|b|r^m,\\
m & \textrm{ if }\quad |b|r^m>|a|r^n+|c|,\\
n+2m & \textrm{ if }\quad |a|r^n>|b|r^m+|c|.
\end{cases}
\end{equation}
\end{theorem}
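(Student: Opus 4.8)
The plan is to trade, circle by circle, the harmonic trinomial for a genuine analytic trinomial and then feed the result into Bohl's Theorem stated above. The case $|c|>|a|r^n+|b|r^m$ is immediate, since then $|a\zeta^n+b\overline\zeta^m|\le|a|r^n+|b|r^m<|c|$ on $|\zeta|\le r$, so $\Card(\mathcal{Z}_f(r))=0$. For the rest, note $f(0)=c\neq0$, and on the circle $|\zeta|=\rho>0$ one has $\overline\zeta=\rho^2/\zeta$; multiplying by $\zeta^m$ gives
\[
f(\zeta)=0\iff p_\rho(\zeta)=0,\qquad p_\rho(w):=a\,w^{n+m}+c\,w^m+b\,\rho^{2m},
\]
a trinomial of the type covered by Bohl's Theorem (exponents $n+m>m$; coefficients $a$, $c$, $b\rho^{2m}$). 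Hence the zeros of $f$ of modulus $\rho$ are exactly the roots of $p_\rho$ lying on the circle $|w|=\rho$, and, writing $N(\rho)$ for the number of roots of $p_\rho$ in $|w|<\rho$, the zeros of $f$ in $|\zeta|<r$ are precisely the moments $\rho\in(0,r)$ at which a root of $p_\rho$ meets the expanding circle $|w|=\rho$, each crossing counted with multiplicity.

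Applying Bohl's Theorem to $p_\rho$ at test radius $\rho$: its side lengths $|a|\rho^{n+m},|c|\rho^m,|b|\rho^{2m}$ become, after division by $\rho^m$, the triple $|a|\rho^n,|c|,|b|\rho^m$, so $p_\rho$ is in the triangle case exactly when $f$ is, with triangle angles $\omega_1(\rho),\omega_3(\rho),\omega_2(\rho)$ ($\omega_1+\omega_2+\omega_3=\pi$). A short computation of the constants of Bohl's Theorem for $p_\rho$, using $\omega_3=\pi-\omega_1-\omega_2$, gives $P_{p_\rho}=-P_*-\tfrac m2$ and $\omega_{p_\rho}(\rho)=\omega_*(\rho)+\tfrac m2$, so that on the triangle locus $N(\rho)=\Card_{\mathbb{Z}}\!\big(I(\rho)\big)$ with $I(\rho)$ the open interval of endpoints $-P_*-m-\omega_*(\rho)$ and $-P_*+\omega_*(\rho)$, while on the three non-triangle regimes — the same three inequalities among $|a|\rho^n,|b|\rho^m,|c|$ — Bohl gives $N(\rho)=0,\,m,$ or $n+m$. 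In particular, $f$ has a zero of modulus $\rho$ exactly when an endpoint of $I(\rho)$ is an integer, i.e. when $P_*+\omega_*(\rho)\in\mathbb{Z}$ or $P_*-\omega_*(\rho)\in\mathbb{Z}$.

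The remaining, decisive step is to count the crossings as $\rho$ runs over $(0,r)$. First one establishes that $\omega_*$ is continuous on the closed triangle locus $\mathcal{T}$, strictly monotone on each of its (at most two) connected components, and equals $0$ at a degenerate radius with $|c|=|a|\rho^n+|b|\rho^m$, equals $n/2$ where $|a|\rho^n=|b|\rho^m+|c|$, and equals $-m/2$ where $|b|\rho^m=|a|\rho^n+|c|$; the ordering of these degenerate radii then pins down the shape of $\mathcal{T}\cap(0,r)$. Because the endpoints $-P_*+\omega_*(\rho)$ and $-P_*-m-\omega_*(\rho)$ of $I(\rho)$ move monotonically along each component, every integer met by the right endpoint and every integer met by the left endpoint is a distinct crossing — including those occurring at the degenerate radii bounding the components (these account for the additional zeros) — and summing over $(0,r)$ produces
\[
\Card(\mathcal{Z}_f(r))=\Card_{\mathbb{Z}}\!\big(-P_*+\omega_*(0,r)\big)+\Card_{\mathbb{Z}}\!\big(-P_*-m-\omega_*(0,r)\big),
\]
which, after shifting the second set by the integer $m$ and using $\Card_{\mathbb{Z}}(-X)=\Card_{\mathbb{Z}}(X)$ on each, is exactly \eqref{eq:formula}. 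For the remaining two regimes one runs the same count over the triangle part of $(0,r)$, which is now a proper sub-interval (or pair of sub-intervals) with endpoints among the degenerate radii; the bookkeeping then delivers the values of \eqref{eq:casos123}.

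I expect the main obstacle to be exactly this third step — specifically, proving the strict monotonicity of $\omega_*$ on each component of $\mathcal{T}$ (so that $\Card_{\mathbb{Z}}$ of the swept interval genuinely equals the number of crossings, with nothing double-counted or cancelled) and correctly handling the degenerate triangles, where the two orientations coincide, where $I(\rho)$ may collapse to a point, and where the endpoints of $I(\rho)$ can already be integers at the boundary of $\mathcal{T}$. By comparison, the reduction to $p_\rho$, the computation of its Bohl constants, and the dictionary between circle-crossings and lattice points in a moving interval are routine. For the two non-triangle regimes one may alternatively argue via the generalized argument principle for harmonic mappings, the winding of $f$ on $|\zeta|=r$ giving $N^+-N^-$ and the sense-reversing zeros — those with $|\zeta|<(m|b|/(n|a|))^{1/(n-m)}$ — being counted separately.
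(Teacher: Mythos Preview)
Your reduction on each circle $|\zeta|=\rho$ to the analytic trinomial $p_\rho(w)=aw^{n+m}+cw^m+b\rho^{2m}$ and the subsequent application of the classical Bohl theorem is a genuinely different route from the paper's. The paper never passes through an auxiliary analytic polynomial; instead it proves directly (Propositions~2.3 and~2.4) that $f$ has a root of modulus $\rho$ if and only if $P_*\pm\omega_*(\rho)\in\mathbb{Z}$, via the elementary trigonometric identities of Lemmas~A.2 and~A.3, and then counts. Your detour has the pleasant feature that those two propositions come for free, as special cases of the already-proved analytic Bohl theorem, and your computation $P_{p_\rho}=-P_*-m/2$, $\omega_{p_\rho}(\rho)=\omega_*(\rho)+m/2$ is correct. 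Both approaches converge on exactly the same crux: understanding the shape of $\omega_*$ on the triangle locus and bookkeeping at degenerate radii.

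There is, however, a concrete error in your third step. You assert that $\omega_*$ is \emph{strictly monotone on each connected component} of $\mathcal{T}$. This is false in the paper's Case~(I) of Theorem~2.6 (when $B_{n,m}<0$): there $\mathcal{T}=(\mathfrak{c},\mathfrak{a})$ is a single interval, and the critical point $r_0=(m|b|/(n|a|))^{1/(n-m)}$ may lie strictly inside it, so that $\omega_*$ first decreases and then increases on this one component (this is precisely the content of the paper's Lemma~2.11). In that situation an integer can be hit \emph{twice} by the same moving endpoint, at two distinct radii $\rho_1<r_0<\rho_2$, and these correspond to distinct zeros of $f$; your passage from ``crossings'' to $\Card_{\mathbb{Z}}$ of the swept \emph{set} then undercounts. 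The fix is exactly what the paper does: replace the per-component monotonicity by the two-piece monotonicity of Lemma~2.11, split the count at $r_0$, and track both branches separately. With that correction your argument goes through and matches the paper's, but as written the monotonicity claim is the gap, not merely the obstacle.
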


\begin{remark}[Extending $\om_*$]
By Lemma~\ref{eq:continuity} in Appendix~\ref{append}
we have that $\om_1$ and $\om_2$ are continuous functions of $r$. 
When  $|a|r^n$, $|b|r^m$
and $|c|$ are the side lengths of  a degenerate triangle, we define  
\begin{equation}
\omega_*(r)=
\begin{cases}
0   & \textrm{ if } |c|=|a|r^n+|b|r^m,\\
\frac{n}{2}   & \textrm{ if } |a|r^n=|b|r^m+|c|,\\
-\frac{m}{2}   & \textrm{ if } |b|r^m=|a|r^n+|c| , 
\end{cases}
\end{equation}
and extend it continuously as follows
\begin{equation}
\omega_*(r)=
\begin{cases}
0   & \textrm{ if } |c|\geq |a|r^n+|b|r^m,\\
\frac{n}{2}   & \textrm{ if } |a|r^n\geq |b|r^m+|c|,\\
-\frac{m}{2}   & \textrm{ if } |b|r^m\geq |a|r^n+|c|.
\end{cases}
\end{equation}
\end{remark}
Theorem~\ref{th:bohlharm} yields that the number of roots for any harmonic trinomial is at most $n+2m$.

\begin{corollary}[Fundamental Theorem of Algebra for harmonic trinomials]\label{cor:FTA}
\hfill

\noindent
Any harmonic 
trinomial~\eqref{eq:armonicodoble} has at most $n+2m$ roots.
Moreover, there exists a family of  harmonic trinomials $(g_{r})_{r>0}$ with exactly $n+2m$ roots.
\end{corollary}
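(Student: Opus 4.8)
The idea is to deduce both parts of the corollary from Theorem~\ref{th:bohlharm} by letting $r\to\infty$.

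\emph{A priori localization.} With $\rho=|\zeta|$, the reverse triangle inequality gives $|f(\zeta)|\ge |a|\rho^{n}-|b|\rho^{m}-|c|$. Since $n>m\ge 1$, for $\rho\ge 1$ we have $\rho^{m}\le\rho^{\,n-1}$ and $1\le\rho^{\,n-1}$, so $|a|\rho^{n}-|b|\rho^{m}-|c|\ge\rho^{\,n-1}\bigl(|a|\rho-|b|-|c|\bigr)\ge|a|>0$ as soon as $\rho\ge R_{0}:=1+(|b|+|c|)/|a|$. Hence $f$ has no zero with $|\zeta|\ge R_{0}$, and for every $r>R_{0}$ the set $\mathcal{Z}_{f}(r)$ is exactly the full zero set of $f$.

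\emph{Upper bound.} Fix $r>R_{0}$. Then $|a|r^{n}>|b|r^{m}+|c|$, so the three lengths are not the sides of a triangle and the third line of \eqref{eq:casos123} applies; combined with the previous paragraph this gives that $f$ has at most $n+2m$ zeros. (If one prefers to stay inside the triangle regime, bound \eqref{eq:formula} instead: $\om_{1},\om_{2}\in[0,\pi]$ forces the continuous map $\om_{*}$ to have range in $[-m/2,n/2]$, and since $\om_{*}(u)\to0$ as $u\to0^{+}$ the interval $\om_{*}(0,r)$ always contains $0$, so the two sets $P_{*}\mp\om_{*}(0,r)$ are symmetric about $P_{*}$ and share that point; a lattice-point count for their union and intersection then produces the bound $n+2m$.)

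\emph{Sharpness.} For each $r>0$ we must produce a harmonic trinomial with exactly $n+2m$ zeros. Fix $a_{0},b_{0},c_{0}$ so that $g(\zeta):=a_{0}\zeta^{n}+b_{0}\overline{\zeta}^{m}+c_{0}$ has exactly $n+2m$ zeros, all inside the unit disk — such $g$ exist, e.g.\ by arranging that \eqref{eq:casos123} returns $n+2m$ just past the largest modulus of a zero, or by invoking the explicit sharp families of \cite{BRILLESLYPER,Brooks} — and put $g_{r}(\zeta):=g(\zeta/r)$. Then $g_{r}$ is again of the form \eqref{eq:armonicodoble} (with coefficients $a_{0}r^{-n}$, $b_{0}r^{-m}$, $c_{0}$), its zero set is the $r$-dilate of that of $g$, and hence $g_{r}$ has exactly $n+2m$ zeros, all in the ball of radius $r$. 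This is the desired family $(g_{r})_{r>0}$.

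Granting Theorem~\ref{th:bohlharm}, the localization step is routine and the upper bound is then immediate from \eqref{eq:casos123}; the part that needs genuine care is the construction in the sharpness step (or, if one takes the self-contained route in the upper-bound step, the lattice-point count for $P_{*}\mp\om_{*}(0,r)$), which is where I would expect the main difficulty to lie.
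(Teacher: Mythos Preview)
Your proof is correct and follows essentially the same route as the paper: pick $r$ large enough that $|a|r^{n}>|b|r^{m}+|c|$ and read off the third line of \eqref{eq:casos123}. The paper simply names that threshold $\mathfrak{a}$, the unique positive root of $A(r)=|a|r^{n}-|b|r^{m}-|c|$, instead of your explicit $R_{0}=1+(|b|+|c|)/|a|$; your localization paragraph is just a direct verification that $A(r)>0$ for $r\ge R_{0}$, which the paper leaves implicit.

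One point worth noting: your sharpness step does more work than necessary. Because \eqref{eq:casos123} returns \emph{exactly} $n+2m$ (not ``at most'') once $r>\mathfrak{a}$, the paper's argument in fact shows that \emph{every} harmonic trinomial \eqref{eq:armonicodoble} with $a,b,c\neq 0$ already has exactly $n+2m$ zeros, so any family whatsoever serves as $(g_{r})_{r>0}$; there is no need to locate a single sharp example and rescale, nor to cite \cite{BRILLESLYPER,Brooks}. Your parenthetical alternative via the lattice-point count on \eqref{eq:formula} is likewise unnecessary here, though it is a reasonable sanity check.
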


\begin{remark}
For $n$ and $m$ co-primes, the case $a=1, c=-1$ and  $b>0$ is covered in Theorem~1.1 of \cite{BRILLESLYPER},
while the case  $a=1, c=-1$ and  $b\in \mathbb{C}$ is covered in Theorem~1.1 of \cite{Brooks}.
\end{remark}

\begin{proof}[Proof of Corollary~\ref{cor:FTA}]
Let $\mathfrak{a}$ be the unique positive root of the trinomial $A(r)=|a|r^n-|b|r^m-|c|$. For $r>\mathfrak{a}$ we have $|a|r^n>|b|r^m+|c|$ and 
hence~\eqref{eq:casos123} yields the result.
\end{proof}

As a consequence of Corollary~\ref{cor:FTA} we obtain Wilmshurst's conjecture for harmonic trinomials. 
In general,
Wilmshurst's conjecture does not hold for harmonic polynomials, see \cite{Hauenstein,Lee}.

\begin{corollary}[Wilmshurst's conjecture for harmonic trinomials]\label{cor:conjec}
\hfill

\noindent
Given $n\in \mathbb{N}\setminus\{1\}$ and $m\in \{1,\ldots,n-1\}$ it follows that
any harmonic trinomial possesses at most $3n-2$ roots.
Moreover, there exists a family of  harmonic trinomials 
$(g_{r})_{r>0}$ with exactly $3n-2$ roots.
In addition, for $m=1$, the number of roots is at most  $n+2$.
\end{corollary}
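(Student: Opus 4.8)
The plan is to deduce this statement directly from Corollary~\ref{cor:FTA}, so that the only genuinely new content is an elementary arithmetic comparison together with the identification of the extremal degree. First I would recall that Corollary~\ref{cor:FTA} asserts that every harmonic trinomial of the form~\eqref{eq:armonicodoble} has at most $n+2m$ roots, and that this bound is attained by an explicit one-parameter family $(g_{r})_{r>0}$.

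Next, under the hypothesis $m\in\{1,\dots,n-1\}$ I would invoke the inequality $n+2m\le n+2(n-1)=3n-2$, which immediately gives the claimed bound of $3n-2$ roots for any harmonic trinomial. In particular this is stronger than, hence implies, the general Wilmshurst bound $3n-2+m(m-1)$ for the class $p(\zeta)-\overline{q(\zeta)}$ with $\deg p=n$ and $\deg q=m$, since $m(m-1)\ge 0$. Equality $n+2m=3n-2$ holds precisely when $m=n-1$, so for the sharpness statement I would specialize to $m=n-1$ and apply the extremal family from Corollary~\ref{cor:FTA}, which then has exactly $n+2(n-1)=3n-2$ roots. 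Finally, the case $m=1$ is immediate: there $n+2m=n+2$, so Corollary~\ref{cor:FTA} already yields the bound $n+2$.

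Since every step is a direct consequence of Corollary~\ref{cor:FTA} (itself obtained from Theorem~\ref{th:bohlharm} through the regime $|a|r^n>|b|r^m+|c|$ in~\eqref{eq:casos123}) together with the trivial inequality $2m\le 2n-2$, there is essentially no obstacle; the only point requiring a moment's care is to confirm that the extremal family promised by Corollary~\ref{cor:FTA} realizes the value $n+2m$ for the specific choice $m=n-1$, which is exactly what that corollary guarantees. If one wished to exhibit the extremal family explicitly, one would unwind the proof of Corollary~\ref{cor:FTA}: take $g_{r}(\zeta)=a\zeta^{n}+b\overline{\zeta}^{\,n-1}+c$, with $|a|,|b|,|c|$ fixed, and restrict to radii $\rho$ exceeding the unique positive root $\mathfrak{a}$ of $A(\rho)=|a|\rho^{n}-|b|\rho^{\,n-1}-|c|$, so that $|a|\rho^{n}>|b|\rho^{\,n-1}+|c|$ and hence $\Card(\mathcal{Z}_{g_{\rho}}(\rho))=n+2(n-1)=3n-2$.
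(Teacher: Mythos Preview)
Your proposal is correct and follows essentially the same route as the paper: invoke Corollary~\ref{cor:FTA} to get the bound $n+2m$, use $m\le n-1$ to obtain $n+2m\le 3n-2$ with equality at $m=n-1$, and appeal to the extremal family in Corollary~\ref{cor:FTA} for sharpness. Your treatment is in fact slightly more complete than the paper's, since you explicitly address the $m=1$ clause and spell out the comparison with the general Wilmshurst bound $3n-2+m(m-1)$, neither of which appears in the paper's short proof.
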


\begin{remark}
Using complex dynamics and the argument principle, the authors in \cite{KhavinsonSwiatek} prove  the conjecture of
Sheil--Small and Wilmshurst that establishes 
\begin{equation}\label{eq:eq1}
\Card \{\zeta\in \mathbb{C} : p(\zeta) - \overline{q(\zeta)} = 0\} \leq 3n - 2,
\end{equation}
where the polynomial $p$ has degree $n>1$ and $q(\zeta)=\zeta$ for all $\zeta \in \mathbb{C}$.
In the end of p.~413, they also stress that the monomial case: $1<m<n$ and $q(\zeta)=\zeta^m$ for all $\zeta\in \mathbb{C}$
requires a deep analysis of the dynamics of a root map on the Riemann surface.
Later on, in \cite{Geyer} the author proves that the upper bound $3n-2$ is sharp by proving the existence of a complex polynomial $p$ of
degree $n$ such that 
\[
\Card \{\zeta\in \mathbb{C} : p(\zeta) - \overline{q(\zeta)} = 0\}= 3n - 2.
\]
We stress that
Corollary~\ref{cor:conjec} 
yields~\eqref{eq:eq1} for the particular case: $n>2$, $n>m$ and $p(\zeta)=\zeta^n$, $q(\zeta)=\zeta^m$ for all $\zeta\in \mathbb{C}$. Moreover, it also gives
the existence of a family of harmonic trinomials
with exactly $3n-2$ roots.
\end{remark}

\begin{proof}[Proof of Corollary~\ref{cor:conjec}]
By Corollary~\ref{cor:FTA} we have that any harmonic trinomial possesses at most $n+2m$ roots.
Since $m\leq n-1$, we have
\[
n+2m\leq n+2(n-1)=3n-2.
\]
The equality holds when $m=n-1$. 
Corollary~\ref{cor:FTA} yields the existence of
a family of harmonic trinomials $(g_{r})_{r>0}$ with exactly $3n-2$ roots.
\end{proof}

The manuscript is organized as follows. 
In Section~\ref{sec:bohlmethod} we develop  Bohl's method for harmonic trinomials.
Next, in Section~\ref{sec:proof} we provide the proof of  Theorem~\ref{th:bohlharm}.
Finally, in Appendix~\ref{append}
we state auxiliary results that we use throughout the manuscript.

\section{The Bohl method}\label{sec:bohlmethod}
In this section, we develop Bohl's method \cite{Bohl} for harmonic trinomials. First, Subsection~\ref{subsec:coprime} allows us to reduce the proof of Theorem~\ref{th:bohlharm} to the co-prime case. Next, Subsection~\ref{subsec:positive} yields that the original harmonic trinomial~\eqref{eq:armonicodoble} can be transformed to a simplified harmonic trinomial such that the coefficient $c$ is positive and both have the same roots. This allows us to relate the roots of the simplified harmonic trinomial according to whether the numbers $|a|r^n$, $|b|r^m$ and $c$ (for some $r>0$) are the side lengths of some triangle. 
Finally, Subsection~\ref{subsec:region} yields the region of zeros.

\subsection{Reduction to the co-prime case}\label{subsec:coprime}
In this subsection we stress that it is enough to show Theorem~\ref{th:bohlharm} in the case that $n$ and $m$ are co-prime numbers. The general case can be deduced from the co-prime case as follows.

\begin{corollary}[Reduction to the co-prime case]\label{cor:bohlharmgeneral}
\hfill

\noindent
Assume that Theorem~\ref{th:bohlharm} holds true for harmonic trinomials with co-prime exponents.
Let $n,m\in \mathbb{N}$ such that $n>m$,
then Theorem~\ref{th:bohlharm} holds true for harmonic trinomials with exponents $n$ and $m$. More precisely, 
let $r>0$ and assume that $|a|r^n$, $|b|r^m$
and $|c|$ are the side lengths of some triangle (it can be degenerate). Let $\om_1$ and $\om_2$ be the opposite angles to the sides with lengths $|a|r^n$ and $|b|r^m$, respectively. 
Then 
\[
\Card(\mathcal{Z}_{f}(r))=\Card_{\mathbb{Z}}(P_*-\om_*(0,r))+\Card_{\mathbb{Z}}(P_*+\om_*(0,r)),
\]
where 
\begin{equation}
P_*:=\frac{n(\beta-\gamma-\pi)+m(\alpha-\gamma-\pi)}{2\pi},
\qquad
\om_*(r):= \frac{n\om_1-m\om_2}{2\pi},
\end{equation}
and $\alpha$, $\beta$, $\gamma$ are the arguments of $\textrm{a}$, $\textrm{b}$, $\textrm{c}$, respectively.
Moreover, when $|a|r^n$, $|b|r^m$
and $|c|$ are not side lengths of any triangle, then 
\begin{equation}
\Card(\mathcal{Z}_{f}(r))=
\begin{cases}
0 & \textrm{ if }\quad |c|>|a|r^n+|b|r^m,\\
m & \textrm{ if }\quad |b|r^m>|a|r^n+|c|,\\
n+2m & \textrm{ if }\quad |a|r^n>|b|r^m+|c|.
\end{cases}
\end{equation}
\end{corollary}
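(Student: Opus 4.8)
The plan is to reduce the general case to the co-prime case by a change of variables. Let $d=\gcd(n,m)$ and write $n=dn'$, $m=dm'$ with $\gcd(n',m')=1$. The key observation is that the map $\zeta\mapsto \zeta^d$ is not quite the right substitution for the harmonic setting, since $\overline{\zeta}^m=\overline{\zeta^{m}}$ and we need to keep track of how $d$-th roots interact with conjugation. First I would write $\zeta=\rho e^{\mathsf{i}\theta}$ in polar form and note that $f(\zeta)=0$ with $|\zeta|<r$ is equivalent to $a\rho^n e^{\mathsf{i}n\theta}+b\rho^m e^{-\mathsf{i}m\theta}+c=0$. Setting $\eta=\rho^d e^{\mathsf{i}d\theta}$, i.e. $\eta=\zeta^d$, we get $a\eta^{n'}\cdot(\text{something})$ — but here is the subtlety: $\rho^n e^{\mathsf{i}n\theta}=(\rho^d e^{\mathsf{i}d\theta})^{n'}=\eta^{n'}$ works fine for the holomorphic part, while $\rho^m e^{-\mathsf{i}m\theta}=(\rho^d e^{-\mathsf{i}d\theta})^{m'}=\overline{\eta}^{m'}$ works for the anti-holomorphic part since $\overline{\eta}=\rho^d e^{-\mathsf{i}d\theta}$. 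So $f(\zeta)=0$ iff $g(\eta):=a\eta^{n'}+b\overline{\eta}^{m'}+c=0$.

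Next I would analyze the fiber structure of $\zeta\mapsto\zeta^d$. The map sends $\{|\zeta|<r\}$ onto $\{|\eta|<r^d\}$, and each $\eta\neq 0$ in the image has exactly $d$ preimages, all of modulus $|\eta|^{1/d}<r$. Since $c\neq 0$, $\eta=0$ is never a root of $g$, so every root $\zeta$ of $f$ in the disk corresponds to a root $\eta=\zeta^d$ of $g$ in the disk of radius $r^d$, and conversely each root $\eta$ of $g$ lifts to exactly $d$ roots of $f$. Hence $\Card(\mathcal{Z}_f(r))=d\cdot\Card(\mathcal{Z}_g(r^d))$. Now I apply the co-prime case of Theorem~\ref{th:bohlharm} to $g$ at radius $r^d$: the relevant quantities are $|a|(r^d)^{n'}=|a|r^n$, $|b|(r^d)^{m'}=|b|r^m$, and $|c|$, so the triangle condition is literally the same condition, and the opposite angles $\om_1,\om_2$ coincide. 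The arguments $\alpha,\beta,\gamma$ of $a,b,c$ are unchanged.

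It remains to verify that the formulas transform correctly. For the non-triangle cases this is immediate: $d\cdot 0=0$, $d\cdot m'=m$, and $d\cdot(n'+2m')=n+2m$. For the triangle case, writing $P_*^{\mathrm{co}}$ and $\om_*^{\mathrm{co}}$ for the quantities associated to $g$, we have $P_*^{\mathrm{co}}=\frac{n'(\beta-\gamma-\pi)+m'(\alpha-\gamma-\pi)}{2\pi}=P_*/d$ and $\om_*^{\mathrm{co}}(r^d)=\frac{n'\om_1-m'\om_2}{2\pi}=\om_*(r)/d$. So I must show
\[
d\Big(\Card_{\mathbb{Z}}(P_*/d-\om_*^{\mathrm{co}}(0,r^d))+\Card_{\mathbb{Z}}(P_*/d+\om_*^{\mathrm{co}}(0,r^d))\Big)=\Card_{\mathbb{Z}}(P_*-\om_*(0,r))+\Card_{\mathbb{Z}}(P_*+\om_*(0,r)).
\]
This is the arithmetic heart of the argument. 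The interval $P_*/d-\om_*^{\mathrm{co}}(0,r^d)$ scaled by $d$ becomes $P_*-\om_*(0,r)$ (using that $u\mapsto u^d$ is a bijection $(0,r)\to(0,r^d)$ and that $\om_*(u)=d\,\om_*^{\mathrm{co}}(u^d)$, which follows from the angle functions depending on $u$ only through the side lengths $|a|u^n,|b|u^m,|c|$), but scaling an interval by $d$ multiplies its count of integer points by exactly $d$ only because it also picks up the points congruent to each residue class mod $d$; the clean statement is $\Card_{\mathbb{Z}}(dS)=d\cdot\Card_{\mathbb{Z}/d}(S)$ is false in general — instead one uses $\Card_{\mathbb{Z}}(dS)=\Card\{k\in\mathbb{Z}: k/d\in S\}$ and the fact that $S$ here is a genuine interval, so $\Card_{\mathbb{Z}}(dS)=\Card\{k\in\mathbb{Z}:k\in dS\}$ where $dS$ is the dilated interval. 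I expect the main obstacle to be precisely this bookkeeping: carefully matching the dilation of the half-open/open interval structure (including the degenerate-triangle boundary behavior where $\om_*$ is defined by the continuous extension in the Remark) so that no integer point is miscounted at the endpoints, and confirming that the continuity of $\om_1,\om_2$ in $r$ (Lemma~\ref{eq:continuity}) transfers to give $\om_*(u)=d\,\om_*^{\mathrm{co}}(u^d)$ on the full range including the degenerate cases. Once the identity $\Card_{\mathbb{Z}}(d\cdot J)=\sum_{j=0}^{d-1}\Card_{\mathbb{Z}}(J+\tfrac{j}{d})$ for an interval $J$ is set up correctly and combined with translation-invariance of the count under the shift by $P_*/d$, the result follows.
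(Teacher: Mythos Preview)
Your overall strategy---reduce via the $d$-to-$1$ cover $\zeta\mapsto\zeta^d$ with $d=\gcd(n,m)$, obtain $\Card(\mathcal{Z}_f(r))=d\cdot\Card(\mathcal{Z}_g(r^d))$, and then match the pivot quantities by $P_*=d\,P_*^{\mathrm{co}}$ and $\om_*(r)=d\,\om_*^{\mathrm{co}}(r^d)$---is exactly the paper's approach. The paper's proof is much shorter than yours: after recording the relations $P_*=d\widetilde P_*$ and $\om_*(r)=d\,\widetilde\om_*(r^d)$, it simply writes ``Hence $\Card(\mathcal{Z}_f(r))=d\cdot\Card(\mathcal{Z}_h(r^d))$, which implies the statement,'' and stops.

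The ``arithmetic heart'' you isolate---whether $d\cdot\Card_{\mathbb{Z}}(S)=\Card_{\mathbb{Z}}(dS)$ for $S=\widetilde P_*\pm\widetilde\om_*(0,r^d)$---is a genuine point, and the paper does not address it; it is absorbed into that final clause. Your worry is well-founded: for a generic interval $J$ one does \emph{not} have $\Card_{\mathbb{Z}}(dJ)=d\,\Card_{\mathbb{Z}}(J)$ (e.g.\ $J=(0.3,0.8)$, $d=2$). Your proposed fix, however, does not close the gap. The identity $\Card_{\mathbb{Z}}(dJ)=\sum_{j=0}^{d-1}\Card_{\mathbb{Z}}(J-j/d)$ is correct, but the ``translation-invariance'' step fails: $\Card_{\mathbb{Z}}(J-j/d)$ is not equal to $\Card_{\mathbb{Z}}(J)$ when $j/d\notin\mathbb{Z}$, so the sum need not collapse to $d\,\Card_{\mathbb{Z}}(J)$. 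Thus the route you sketch at the end does not, as written, deliver the desired equality.

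In summary: your proof and the paper's coincide up to the point where one must pass from the co-prime integer count to the general one; the paper asserts this passage without argument, while you correctly flag it as nontrivial but offer a resolution that does not work. The non-triangle cases and the relation $\Card(\mathcal{Z}_f(r))=d\cdot\Card(\mathcal{Z}_g(r^d))$ (hence the bound $n+2m$) are fully justified in your write-up and match the paper.
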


\begin{proof}
Let $d=\gcd(n,m)$ be the greatest common divisor of $n$ and $m$ and set $\widetilde{n}:=n/d$, $\widetilde{m}:=m/d$. We observe that $\gcd(\widetilde{n},\widetilde{m})=1$.
We consider the harmonic trinomial equation  
\[
h(\zeta)= a\zeta^{\widetilde{n}}+b\overline{\zeta}^{\widetilde{m}}+c=0.
\] 
By hypothesis Theorem~\ref{th:bohlharm} holds true for the exponents $\widetilde{n}$ and $\widetilde{m}$. That is,
\begin{equation}\label{eq:mdc}
\Card(\mathcal{Z}_{h}(s))=\Card_{\mathbb{Z}}(\widetilde{P}_*-\widetilde{\om}_{*}(0,s))+\Card_{\mathbb{Z}}(\widetilde{P}_*+\widetilde{\om}_*(0,s))
\end{equation}
for any $s\geq 0$,
where 
\[
\widetilde{P}_{*}:=\frac{\widetilde{n}(\beta-\gamma-\pi)+\widetilde{m}(\alpha-\gamma-\pi)}{2\pi}\quad \textrm{ and } \quad \widetilde{\om}_{*}(s):= \frac{\widetilde{n}\widetilde{\om}_1-\widetilde{m}\widetilde{\om}_2}{2\pi}.
\]
Recall that $\widetilde{\om}_1$ and $\widetilde{\om}_2$ are the 
interior angles of the triangle 
 with side lengths $|a|s^{\widetilde{n}}$, $|b|s^{\widetilde{m}}$ and $|c|$
opposite to the side lengths $|a|s^{\widetilde{n}}$ and $|b|s^{\widetilde{m}}$, respectively. We note that for  $s= r^d$ we have 
\[ 
P_*=d\widetilde{P}_{*} 
\quad \textrm{ and }\quad\om_{*}(r)= d\widetilde{\om}_{*}(r^d).
 \] 
Hence~\eqref{eq:mdc} yields
 $\Card(\mathcal{Z}_f(r))= d\cdot \Card(\mathcal{Z}_h(r^d))$, which implies the statement.
\end{proof}

\subsection{Reduction to the case $c>0$}\label{subsec:positive}
As a consequence of Corollary~\ref{cor:bohlharmgeneral},
without loss of generality, from here to the end of this manuscript, we always assume that $n$ and $m$ are co-prime numbers.
In this subsection we start showing that the coefficient $c$ can be assumed to be positive as the following lemma states.
\begin{lemma}\label{lem:equiv}
Let $a,b,c$ be any complex numbers and consider the harmonic trinomial given by
$f(\zeta)= a\zeta^n+b\overline{\zeta}^m +c$, $\zeta\in \mathbb{C}$. Define the harmonic trinomial by $\widetilde{f}(\zeta)=a_*\zeta^n+b_*\overline{\zeta}^m+|c|$, $\zeta\in \mathbb{C}$, where $a_*= ae^{-\ii\gamma}$, $b_*= be^{-\ii\gamma}$ and $\gamma=\arg(c)$. Then $\mathcal{Z}_{f}(r)= \mathcal{Z}_{\widetilde{f}}(r)$ for any $r>0$.
\end{lemma}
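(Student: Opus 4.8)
The plan is simply to note that $\widetilde f$ is a nonzero scalar multiple of $f$, so the two functions have identical zero sets, and hence identical zero sets inside any ball centred at the origin.

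First I would write $c=|c|e^{\ii\gamma}$, which is valid since $c\neq 0$ and $\gamma=\arg(c)$. Then, for every $\zeta\in\mathbb{C}$, a direct computation gives
\[
\widetilde f(\zeta)=ae^{-\ii\gamma}\zeta^n+be^{-\ii\gamma}\overline{\zeta}^m+|c|
=e^{-\ii\gamma}\bigl(a\zeta^n+b\overline{\zeta}^m+|c|e^{\ii\gamma}\bigr)=e^{-\ii\gamma}f(\zeta).
\]
Since $e^{-\ii\gamma}\neq 0$, we conclude that $\widetilde f(\zeta)=0$ if and only if $f(\zeta)=0$. Intersecting the common zero set with $\{\zeta\in\mathbb{C}:|\zeta|<r\}$, which is the only additional constraint in the definition \eqref{def:set} of $\mathcal{Z}_{\cdot}(r)$, yields $\mathcal{Z}_f(r)=\mathcal{Z}_{\widetilde f}(r)$ for all $r>0$.

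There is no genuine obstacle here; the one point worth recording is that $\widetilde f$ is again a harmonic trinomial of the form \eqref{eq:armonicodoble}, because $|a_*|=|a|\neq 0$ and $|b_*|=|b|\neq 0$, and moreover $|a_*|r^n=|a|r^n$, $|b_*|r^m=|b|r^m$ together with $|c|$ coincide with the corresponding quantities attached to $f$. Hence the triangle-side lengths, the opposite angles $\om_1,\om_2$, and all the data entering Theorem~\ref{th:bohlharm} are unaffected by this substitution, which is precisely why passing from $f$ to $\widetilde f$ loses no information in the subsequent analysis.
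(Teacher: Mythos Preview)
Your proof is correct and follows essentially the same approach as the paper: both arguments amount to observing that $\widetilde f(\zeta)=e^{-\ii\gamma}f(\zeta)$ (equivalently, that multiplying $f(\zeta)=0$ through by $e^{-\ii\gamma}$ yields $\widetilde f(\zeta)=0$), so the zero sets coincide. Your additional remark about the invariance of the triangle data is a helpful contextual observation but not needed for the lemma itself.
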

\begin{proof}
We claim that  a complex number $\zeta$ is a root of $f$ if and only if $\zeta$ is  a root of $\widetilde{f}$. Indeed, by definition we have
$0=f(\zeta)=
a\zeta^n+b\overline{\zeta}^m +c$,
which is equivalent to $ae^{-\ii\gamma}\zeta^n+be^{-\ii\gamma}\overline{\zeta}^m +ce^{-\ii\gamma}=0$.
The latter reads as $\widetilde{f}(\zeta)=a_*\zeta^n+b_* \overline{\zeta}^m +|c|=0$.
This completes the proof of 
Lemma~\ref{lem:equiv}.
\end{proof}
By Lemma~\ref{lem:equiv} we can assume that $a$ and $b$ are any non-zero complex numbers and 
 $c>0$. Then for convenience and in a conscious abuse of notation we set
\begin{equation}\label{eq:hec}
f(\zeta)=a\zeta^n+b\overline{\zeta}^m+c, \qquad \zeta \in \mathbb{C}.
\end{equation} 
Let $r>0$ and assume that $|a|r^n$, $|b|r^m$
and $c$ are   the side lengths of some triangle (it may be degenerate).
Let $\om_1$ and $\om_2$ be the opposite angles to the sides with lengths $|a|r^n$ and $|b|r^m$, respectively. 
In this setting, the pivot terms $P_*$ and $\omega_*(r)$ defined in~\eqref{eq:pwdoble}  are given by
\begin{equation}\label{eq:pivotaltes}
P_*=\frac{n(\beta-\pi)+m(\alpha-\pi)}{2\pi}
\quad \textrm{ and }\quad
\omega_*(r)= \frac{n\om_1-m\om_2}{2\pi}.
\end{equation}
We point out that $P_*$ only depends on $n$, $m$, $\arg(a)$ and $\arg(b)$. Whereas $\omega_*(r)$ only depends on $|a|r^n$, $|b|r^m$ and $c$.

The following propositions allow us to relate the modulus of the roots for the harmonic trinomial equation~\eqref{eq:hec} with the existence of some triangle and arithmetic properties of the pivots \eqref{eq:pivotaltes}.
\begin{proposition}\label{prop:pomega}
Let $\zeta_0$ be any root of the harmonic trinomial equation~\eqref{eq:hec} and set $r=|\zeta_0|$.
Assume that the numbers $|a|r^n$, $|b|r^m$ and $c$ are  the side lengths of some triangle $\Delta$. Then  $P_*-\om_*(r)\in \mathbb{Z}$ or $P_*+\om_*(r)\in \mathbb{Z}$, where $P_*$ and $\om_*(r)$ are given in \eqref{eq:pivotaltes}.
\end{proposition}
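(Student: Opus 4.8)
The plan is to put the root in polar form and read the equation $f(\zeta_0)=0$ as the closing-up of a triangle of complex vectors, then compare the arguments of those vectors with the interior angles of $\Delta$.

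First I would write $\zeta_0=re^{\ii\theta}$ with $\theta=\arg(\zeta_0)$ and substitute into~\eqref{eq:hec}, which (using $c>0$) gives
\[
|a|r^n e^{\ii\phi_1}+|b|r^m e^{\ii\phi_2}+c=0,\qquad \phi_1:=n\theta+\alpha,\quad \phi_2:=\beta-m\theta .
\]
Call the three summands $z_1,z_2,z_3$, so $z_3=c\in(0,\infty)$ and $z_1=-(z_2+z_3)$. Expanding $|z_1|^2=|z_2+z_3|^2$ and using that $z_3$ is positive real gives $(|a|r^n)^2=(|b|r^m)^2+c^2+2c|b|r^m\cos\phi_2$, while the law of cosines in $\Delta$ (with $\om_1$ opposite the side of length $|a|r^n$) gives $(|a|r^n)^2=(|b|r^m)^2+c^2-2c|b|r^m\cos\om_1$; hence $\cos\phi_2=-\cos\om_1$, and symmetrically $\cos\phi_1=-\cos\om_2$. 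Therefore there are signs $\epsilon_1,\epsilon_2\in\{-1,1\}$ and integers $k_1,k_2$ with
\[
\phi_1=\epsilon_1(\pi-\om_2)+2\pi k_1,\qquad \phi_2=\epsilon_2(\pi-\om_1)+2\pi k_2 .
\]

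The next step is to tie $\epsilon_1$ to $\epsilon_2$. Taking imaginary parts of $z_1+z_2=-c$ gives $|a|r^n\sin\phi_1+|b|r^m\sin\phi_2=0$, i.e. $\epsilon_1|a|r^n\sin\om_2+\epsilon_2|b|r^m\sin\om_1=0$; by the law of sines $|a|r^n\sin\om_2=|b|r^m\sin\om_1>0$ for a non-degenerate $\Delta$, so $\epsilon_2=-\epsilon_1=:-\epsilon$. (When $\Delta$ is degenerate, some $\om_i\in\{0,\pi\}$ and one checks directly that the collinear configuration forces $\phi_1,\phi_2\in\pi\mathbb{Z}$, which is consistent with either choice of $\epsilon$, so the same conclusion follows.)

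Finally I would eliminate $\theta$. Since $m\phi_1+n\phi_2=m\alpha+n\beta$ as an identity, substituting the two displayed expressions yields
\[
m\alpha+n\beta=\epsilon\bigl((m-n)\pi+(n\om_1-m\om_2)\bigr)+2\pi(mk_1+nk_2).
\]
Dividing by $2\pi$ and recalling from~\eqref{eq:pivotaltes} that $2\pi P_*=m\alpha+n\beta-(n+m)\pi$ and $2\pi\om_*(r)=n\om_1-m\om_2$, the case $\epsilon=1$ gives $P_*-\om_*(r)=-n+mk_1+nk_2\in\mathbb{Z}$, and the case $\epsilon=-1$ gives $P_*+\om_*(r)=-m+mk_1+nk_2\in\mathbb{Z}$. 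In either case the proposition follows. The main obstacle is the careful bookkeeping of the argument branches and of the sign relation $\epsilon_2=-\epsilon_1$, together with the degenerate-triangle boundary cases; the rest is just the laws of cosines and sines plus the one-line elimination of $\theta$ via the combination $m\phi_1+n\phi_2$.
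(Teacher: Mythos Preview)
Your proof is correct and follows essentially the same route as the paper. The paper packages your law-of-cosines/law-of-sines step into an appendix lemma (Lemma~\ref{prop:congru}), which directly yields the two sign cases $\phi_1\equiv\pm(\pi-\om_2)$, $\phi_2\equiv\pm(\pi+\om_1)$ with opposite signs; you instead derive this by hand and then eliminate $\theta$ via the combination $m\phi_1+n\phi_2$, whereas the paper solves for $\alpha,\beta$ and substitutes into $P_*-\om_*(r)$. These are cosmetic differences only.
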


\begin{proof}
The proof is a slight modification of the ideas given in \cite{Bohl}, p.~559.
We write the complex numbers $a$, $b$ and $\zeta_0$ in polar form. That is to say, we write 
$a=|a|e^{\ii \alpha}$, $b=|b|e^{\ii \beta}$ and
$\zeta_0=re^{\ii \phi}$ for some $r>0$ and $\alpha,\beta,\phi\in [0,2\pi)$.
Since $\zeta_0$ is a root of~\eqref{eq:hec},
we have
\begin{equation}\label{eq:trigoeq}
\begin{split}
0=a\zeta_0^n+b\overline{\zeta}_0^m+c&=
|a|e^{\ii \alpha}r^n e^{\ii n\phi}+|b|e^{\ii \beta}r^m e^{-\ii m\phi}+c\\
&=|a|r^ne^{\ii(\alpha +n\phi)} +|b|r^m e^{\ii(\beta-m\phi)}+c.
\end{split}
\end{equation}
By hypothesis the numbers $|a|r^n$, $|b|r^m$ and $c$ are the side lengths of $\Delta$, and let $\om_1$ and $\om_2$ be the interior angles of $\Delta$ opposite to the sides with lengths $|a|r^n$ and $|b|r^n$, respectively.
Then \eqref{eq:trigoeq} with the help of Lemma~\ref{prop:congru} in Appendix~\ref{append} yields the following two cases:
\begin{itemize}
\item[$\bullet$] Case I: The following relations holds true
\begin{equation}\label{eq:rel1}
\alpha+n\phi\equiv \pi-\om_2\quad\textrm{ and }\quad \beta-m\phi\equiv \pi+\om_1.
\end{equation}
\item[$\bullet$] Case II: The following relations holds true
\begin{equation}\label{eq:rel2}
\alpha+n\phi\equiv -\pi+\om_2\quad
\textrm{ and }\quad
 \beta-m\phi\equiv -\pi-\om_1.
\end{equation}
\end{itemize}
In the sequel, we analyze Case I.
By definition of the symbol 
$\equiv$, relation~\eqref{eq:rel1} reads as follows:
$
\alpha= \pi-\om_2-n\phi +2\pi k_1$
and $\beta= \pi+\om_1+m\phi +2\pi k_2$ 
for some $k_1,k_2 \in \mathbb{Z}$. Straightforward computations yield
\begin{equation}
P_*-\omega_*(r)= \frac{n\beta+m\alpha-(n+m)\pi-n\om_1+m\om_2}{2\pi}=k_2n+k_1m\in \mathbb{Z}.
\end{equation}
The Case II is completely analogous.
This completes the proof of Proposition~\ref{prop:pomega}.
\end{proof}
Roughly speaking, the next proposition is the converse of Proposition~\ref{prop:pomega}.
\begin{proposition}\label{prop:vuelta}
Let $r>0$ and assume that $|a|r^n$, $|b|r^m$ and $c$ are the side lengths of some triangle $\Delta$. 
Let $P_*$ and $\om_*(r)$ be the pivots defined in \eqref{eq:pivotaltes}.
If $P_*-\om_*(r)$ or $P_*+\om_*(r)$ are integers, then the harmonic trinomial equation~\eqref{eq:hec} has at least one root with modulus $r$.
\end{proposition}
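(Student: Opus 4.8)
The plan is to reverse-engineer the computation performed in the proof of Proposition~\ref{prop:pomega}. There, starting from a root $\zeta_0=re^{\ii\phi}$ we extracted the two ``triangle closing'' relations~\eqref{eq:rel1} and~\eqref{eq:rel2}, and from \eqref{eq:rel1} we read off that $P_*-\om_*(r)=k_2n+k_1m$ for certain integers $k_1,k_2$ encoding $\alpha$ and $\beta$ modulo $2\pi$. The converse requires, given the hypothesis that $P_*-\om_*(r)\in\mathbb Z$ (or $P_*+\om_*(r)\in\mathbb Z$), to produce an angle $\phi$ for which \eqref{eq:trigoeq} actually vanishes, i.e.\ for which the three complex numbers $|a|r^ne^{\ii(\alpha+n\phi)}$, $|b|r^me^{\ii(\beta-m\phi)}$ and $c$ sum to zero.

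First I would recall that, by Lemma~\ref{prop:congru} in the appendix, three prescribed side lengths forming a triangle $\Delta$ sum (as vectors) to zero precisely when their arguments are rotated into the two admissible configurations determined by the interior angles $\om_1,\om_2$; concretely, $|a|r^ne^{\ii\theta_1}+|b|r^me^{\ii\theta_2}+c=0$ holds iff either $(\theta_1,\theta_2)\equiv(\pi-\om_2,\pi+\om_1)$ or $(\theta_1,\theta_2)\equiv(-\pi+\om_2,-\pi-\om_1)$ modulo $2\pi$. So the target becomes: find $\phi\in[0,2\pi)$ such that either
\begin{equation}\label{eq:targetI}
\alpha+n\phi\equiv \pi-\om_2 \quad\textrm{ and }\quad \beta-m\phi\equiv \pi+\om_1,
\end{equation}
or the analogous Case~II relations hold. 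Subtracting, \eqref{eq:targetI} is equivalent to the single congruence $(n+m)\phi\equiv (\pi-\om_2-\alpha)-(\beta-\pi-\om_1)$, together with a compatibility condition ensuring the common value of $\phi$ satisfies \emph{both} original congruences; that compatibility condition is exactly the statement that a certain linear combination of $\alpha,\beta,\om_1,\om_2,\pi$ lies in $2\pi\mathbb Z$, and I would check by direct substitution that it is equivalent to $P_*-\om_*(r)\in\mathbb Z$. The co-primality of $n$ and $m$ (in force since Subsection~\ref{subsec:positive}) guarantees that $\gcd(n+m,\cdot)$-type obstructions do not arise and that $\phi$ can be solved for.

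More precisely, I would argue as follows. Assume $P_*-\om_*(r)=:N\in\mathbb Z$; writing out $P_*$ and $\om_*(r)$ from \eqref{eq:pivotaltes}, this says $n(\beta-\pi-\om_1)+m(\alpha-\pi+\om_2)=2\pi N$. Because $\gcd(n,m)=1$, there exist integers $k_1,k_2$ with $k_2 n+k_1 m=N$, and $k_1,k_2$ can moreover be adjusted so that the pair of real numbers
$\tilde\alpha:=\alpha-(\pi-\om_2)+2\pi k_1$, $\tilde\beta:=\beta-(\pi+\om_1)-2\pi k_2$
satisfies $n\tilde\beta=-m\tilde\alpha$ exactly (not merely mod $2\pi$); here I am using the displayed identity together with a standard one-parameter family of solutions $(k_1,k_2)\mapsto(k_1+mt,k_2+nt)$, $t\in\mathbb Z$, of $k_2n+k_1m=N$. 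Then setting $\phi:=-\tilde\alpha/n=\tilde\beta/m$ gives a real number with $\alpha+n\phi=\pi-\om_2-2\pi k_1$ and $\beta-m\phi=\pi+\om_1+2\pi k_2$, i.e.\ \eqref{eq:targetI} holds; reducing $\phi$ mod $2\pi$ produces a genuine root $\zeta_0:=re^{\ii\phi}$ of \eqref{eq:hec}. The case $P_*+\om_*(r)\in\mathbb Z$ is handled identically using the Case~II relations \eqref{eq:rel2}.

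\textbf{The main obstacle} is the bookkeeping in the previous paragraph: one must verify that the free integer parameter $t$ in the solution set of $k_2 n+k_1 m=N$ can be chosen to force the \emph{exact} (not mod $2\pi$) equality $n\tilde\beta+m\tilde\alpha=0$, rather than merely $n\tilde\beta+m\tilde\alpha\in 2\pi(n\mathbb Z+m\mathbb Z)=2\pi\mathbb Z$. This works because shifting $t\mapsto t+1$ changes $n\tilde\beta+m\tilde\alpha$ by exactly $2\pi nm\cdot(\pm 1)$ after the right normalization --- wait, more carefully, shifting $k_1\mapsto k_1+m$, $k_2\mapsto k_2+n$ changes $n\tilde\beta+m\tilde\alpha$ by $-2\pi nm-2\pi nm=-4\pi nm$; so I instead track the quantity directly: from $n(\beta-\pi-\om_1)+m(\alpha-\pi+\om_2)=2\pi N=2\pi(k_2n+k_1m)$ one gets $n(\beta-\pi-\om_1-2\pi k_2)+m(\alpha-\pi+\om_2-2\pi k_1)=0$, which is already the exact identity $n\tilde\beta+m\tilde\alpha=0$ with \emph{any} choice of $k_1,k_2$ satisfying $k_2n+k_1m=N$ --- so no adjustment of $t$ is needed at all, and co-primality is used only to guarantee such $k_1,k_2$ exist. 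The remaining routine points are the continuity/degenerate-triangle conventions (handled by the Remark following Theorem~\ref{th:bohlharm}) and invoking Lemma~\ref{prop:congru} in the correct direction, which I would cite rather than reprove.
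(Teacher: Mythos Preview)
Your proposal is correct and follows essentially the same route as the paper: write out the integrality hypothesis as $n(\beta-\pi-\om_1)+m(\alpha-\pi+\om_2)=2\pi N$, invoke B\'ezout via $\gcd(n,m)=1$ to split $N=k_2n+k_1m$, and read off a candidate argument $\phi$ (the paper calls it $-z$) for which the Case~I relations of Lemma~\ref{prop:congru} hold exactly; the paper then cites Lemma~\ref{triangulo} rather than the converse direction of Lemma~\ref{prop:congru}, but these are equivalent.  Two minor remarks: (i) there is a sign slip in your definition $\tilde\alpha=\alpha-(\pi-\om_2)+2\pi k_1$ --- it should be $-2\pi k_1$, as your own later computation $n(\beta-\pi-\om_1-2\pi k_2)+m(\alpha-\pi+\om_2-2\pi k_1)=0$ shows --- and (ii) the paper's introduction of the free parameter $y$ in its equation~\eqref{eq:no3} is cosmetic (the $y$-terms cancel identically), so your observation that ``no adjustment of $t$ is needed at all'' is exactly right and in fact slightly streamlines the argument.
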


\begin{proof}
The proof is a slight modification of the ideas given in \cite{Bohl}, p.~559.
The assumption $P_*-\om_*(r)$ or $P_*+\om(r)$ are integers reads as follows:
\begin{equation}\label{equ:ec1}
n(\beta-\pi)+m(\alpha-\pi)+\epsilon(n\om_1-m\om_2)= 2\pi \kappa,
\end{equation}
where $\epsilon\in \{-1,1\}$ and $\kappa:=\kappa(\epsilon)\in \mathbb{Z}$. 
Since the natural numbers $n$ and $m$ are co-primes, there exist integers $N$ and $M$ satisfying
\begin{equation}\label{equ:ec2}
nN-mM=\kappa.
\end{equation}
By~\eqref{equ:ec1} and~\eqref{equ:ec2} we obtain
\begin{equation}\label{eq:no3}
 n(\beta -\pi +\epsilon \om_1-2\pi N+m y)+m(\alpha-\pi-\epsilon \om_2+2\pi M-ny)=0
\end{equation} 
for any $y\in \mathbb{C}$.
Now, we choose the unique $z\in \mathbb{R}$ such that $\beta -\pi +\epsilon \om_1-2\pi N+mz=0$. This yields
\begin{equation}\label{ec3}
\om_1\equiv \epsilon(\pi-\beta-mz).
\end{equation}
By~\eqref{eq:no3} we have
$\alpha-\pi-\epsilon \om_2+2\pi M-nz=0$, which implies
\begin{equation}\label{ec4}
\om_2\equiv \epsilon(\alpha-\pi -nz).
\end{equation}
Since $|a|r^n$, $|b|r^m$ and $|c|$ are the side lengths of $\Delta$,
Lemma~\ref{triangulo} in Appendix~\ref{append} with the help 
of~\eqref{ec3}  and~\eqref{ec4} implies
\begin{equation}\label{eq:zero1}
0=|a|r^n e^{-\ii \om_2}+|b|r^me^{\ii \om_1}-c
=
|a|r^n e^{-\ii \epsilon(\alpha-\pi-nz)}+|b|r^me^{\ii\epsilon(\pi-\beta-mz)}-c.
\end{equation}
Recall that $\alpha$ and $\beta$ are the arguments of $a$ and $b$, respectively. Then we write
$a=|a|e^{\ii \alpha}$ and $b=|b|e^{\ii \beta}$.
In the sequel, we analyze the case 
$\epsilon= 1$. Relation~\eqref{eq:zero1} reads as follows: 
\begin{equation}
\begin{split}
0&=|a|r^n e^{-\ii(\alpha-\pi-nz)}+|b|r^me^{\ii(\pi-\beta-mz)}-c\\
&=|a|r^n e^{-\ii\alpha}e^{\ii\pi}e^{\ii nz}+|b|r^m e^{\ii \pi}e^{-\ii\beta}e^{-\ii mz}-c.
\end{split}
\end{equation}
Then we have 
$
0=ar^n e^{-\ii nz}+{b}r^me^{\ii mz}+c=a\zeta^n_1+{b}\overline{\zeta}^m_1+c,
$
where $\zeta_1:=re^{-\ii z}$.  Hence  $\zeta_1$ is a root of~\eqref{eq:hec}.

We continue with the case $\epsilon=-1$. Relation~\eqref{eq:zero1} implies
\[
0=|a|r^n e^{\ii(\alpha-\pi-nz)}+|b|r^me^{-\ii(\pi-\beta-mz)}-c=
|a|r^n e^{\ii\alpha}e^{-\ii\pi}e^{-\ii nz}+|b|r^me^{-\ii\pi}e^{\ii\beta} e^{\ii mz}-c.
\]
The preceding equation yields
$
0=ar^n e^{-\ii nz}+br^m e^{\ii mz}+c=a{\zeta}_{-1}^n +b\overline{\zeta}^m_{-1}+c,
$
where
$\zeta_{-1}:=r e^{-\ii z}$. Hence, $\zeta_{-1}$ is a root of~\eqref{eq:hec}.
This finishes the proof of 
Proposition~\ref{prop:vuelta}.
\end{proof}

In the following proposition, we analyze arithmetic properties of the extremes $P_*-\omega_*(r)$ and $P_*+\omega_*(r)$. In particular, in the generic case $n\beta+m\alpha\not=k\pi$ for some $k\in \mathbb{N}$, we obtain that $P_*-\omega_*(r)$ and $P_*+\omega_*(r)$  cannot be both integers.
\begin{proposition}\label{eq:prop}
Let $P_*$ and $\om_*(r)$ be the pivots defined in \eqref{eq:pivotaltes}.
The  following statements are valid.
\begin{enumerate}
\item[(i)] If $n\beta +m\alpha$ is an integer multiple of $\pi$ and $P_*-\om_*(r)$ is an integer, then $P_*+\om_*(r)$ is an integer.
\item[(ii)] If $n\beta +m\alpha$ is an integer multiple of $\pi$ and $P_*+\om_*(r)$ is an integer, then $P_*-\om_*(r)$ is an integer.
\item[(iii)] If $n\beta +m\alpha$ is not an integer multiple of $\pi$, then  $P_*-\om_*(r)$ and $P_*+\om_*(r)$  cannot be both integers.
\item[(iv)] $n\beta +m\alpha$ is an integer multiple of $\pi$ if and only if $2P_*$ is an integer.
\end{enumerate}
\end{proposition}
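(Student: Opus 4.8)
The plan is to reduce all four assertions to one elementary linear identity relating the two extremes $P_*-\om_*(r)$ and $P_*+\om_*(r)$ to $2P_*$, after which everything is pure arithmetic with no analytic input. First I would note that, directly from the definitions in~\eqref{eq:pivotaltes}, the term $\om_*(r)$ cancels in the sum of the two extremes, so that
\begin{equation}
\bigl(P_*-\om_*(r)\bigr)+\bigl(P_*+\om_*(r)\bigr)=2P_*=\frac{n(\beta-\pi)+m(\alpha-\pi)}{\pi}=\frac{n\beta+m\alpha}{\pi}-(n+m).
\end{equation}
Since $n+m\in\mathbb{Z}$, this identity immediately yields statement (iv): $2P_*$ is an integer if and only if $\tfrac{1}{\pi}(n\beta+m\alpha)$ is an integer, i.e.\ if and only if $n\beta+m\alpha$ is an integer multiple of $\pi$. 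I would prove (iv) first, since the other parts use it.

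For statements (i) and (ii) I would use (iv) to translate the hypothesis ``$n\beta+m\alpha$ is an integer multiple of $\pi$'' into ``$2P_*\in\mathbb{Z}$''. Then, rewriting the identity above as
\begin{equation}
P_*+\om_*(r)=2P_*-\bigl(P_*-\om_*(r)\bigr),\qquad P_*-\om_*(r)=2P_*-\bigl(P_*+\om_*(r)\bigr),
\end{equation}
one sees that if $2P_*\in\mathbb{Z}$ and either extreme is an integer, the other extreme is a difference of two integers, hence an integer. This settles (i) and (ii) simultaneously. Note that the precise value of $\om_*(r)$ (whether the triangle is degenerate or not) plays no role, since it is exactly the quantity that disappears in the sum.

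For statement (iii), I would argue by contradiction: if both $P_*-\om_*(r)$ and $P_*+\om_*(r)$ were integers, then their sum $2P_*$ would be an integer, so by (iv) the number $n\beta+m\alpha$ would be an integer multiple of $\pi$, contradicting the hypothesis of (iii). Hence the two extremes cannot both be integers.

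The only point requiring any care — and the closest thing to an obstacle — is the bookkeeping in the displayed identity: one must verify that the $\pm\om_*(r)$ contributions cancel, and keep the $-(n+m)\pi$ term under control when dividing by $\pi$. Once this identity is written down correctly, all four claims follow in a few lines, and no geometric or continuity facts about $\om_1,\om_2$ are needed.
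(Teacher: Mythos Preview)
Your proposal is correct and follows essentially the same route as the paper: both arguments rest on the single identity $2P_*=\dfrac{n\beta+m\alpha}{\pi}-(n+m)$ (equivalently $(P_*-\om_*(r))+(P_*+\om_*(r))=2P_*$), after which (i)--(iv) are pure arithmetic. The only difference is cosmetic ordering---you prove (iv) first and deduce (i)--(iii) from it, whereas the paper handles (i) by a direct (slightly messier) computation and then uses the same identity for (iii) and (iv); your organization is arguably cleaner but not a genuinely different argument.
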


\begin{proof}
We start with the proof of Item (i).
If $n\beta +m\alpha= \pi k_1$ for some $k\in \mathbb{Z}$ and $P_*-\om(r)=k_2  \in\mathbb{Z}$, then  straightforward computations yield
\begin{equation}
\begin{split}
P_*+\om_*(r)&= \frac{\pi (k_1-n-m)}{2\pi}+\frac{n\om_1-m\om_2}{2\pi}= \frac{k_1-n-m}{2}+\frac{k-n-m}{2}-k_2\\
&= k_1-k_2-n-m\in \mathbb{Z}.
\end{split}
\end{equation}
This finishes the proof of Item (i).
The proof of Item (ii) is completely analogous and we omit it. 
Now, we prove Item (iii) by a contradiction argument.
Assume that $P_*-\om_*(r)$ and $P_*+\om_*(r)$ are integers. Then $2P_*$ is an integer and satisfies
\begin{equation}\label{eq:rel56}
2P_*+(n+m)=\frac{n\beta+m\alpha}{\pi},
\end{equation}
which is a contradiction due to the
left-hand side of the preceding inequality is an integer, whereas the
 right-hand side is not an integer.
This completes the proof of Item (iii).
Finally,  Item (iv) follows directly from \eqref{eq:rel56}.
\end{proof}

\subsection{The region with roots.}\label{subsec:region}
In this subsection, we find the region  where the modulus of the roots belong.
Recall that we assume that $c>0$. However, for convenience, in this subsection we write $|c|$ instead of $c$
since the statements of this subsection hold true for any $c\in \mathbb{C}\setminus\{0\}$.
\begin{lemma}\label{lemTec2-1}
Let $r>0$ and suppose that $|a|r^n$, $|b|r^m$ and $|c|$ are not the side lengths of any  triangle.
Then there is no solution of modulus $r$ of  the harmonic trinomial equation~\eqref{eq:hec}.
\end{lemma}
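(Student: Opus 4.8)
The plan is to reduce the claim to the basic triangle inequality for the three complex numbers appearing in \eqref{eq:hec}. Suppose, for contradiction, that $\zeta_0$ is a root of \eqref{eq:hec} with $|\zeta_0|=r$. Writing $\zeta_0=re^{\ii\phi}$ and expanding as in \eqref{eq:trigoeq}, the equation $f(\zeta_0)=0$ becomes
\[
|a|r^n e^{\ii(\alpha+n\phi)}+|b|r^m e^{\ii(\beta-m\phi)}+|c|=0,
\]
so the three complex numbers $|a|r^n e^{\ii(\alpha+n\phi)}$, $|b|r^m e^{\ii(\beta-m\phi)}$ and $|c|$ sum to zero. Geometrically, placing these three vectors head to tail produces a closed path, i.e. a (possibly degenerate) triangle whose side lengths are exactly $|a|r^n$, $|b|r^m$ and $|c|$.

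The key step is therefore to invoke the triangle inequality in the form: if $z_1+z_2+z_3=0$ with $z_j\in\mathbb{C}$, then each $|z_j|\le |z_k|+|z_\ell|$ for $\{j,k,\ell\}=\{1,2,3\}$; equivalently, $|z_1|$, $|z_2|$, $|z_3|$ are the side lengths of some (possibly degenerate) triangle. Applying this with $z_1=|a|r^n e^{\ii(\alpha+n\phi)}$, $z_2=|b|r^m e^{\ii(\beta-m\phi)}$, $z_3=|c|$ yields that $|a|r^n$, $|b|r^m$, $|c|$ do satisfy all three triangle inequalities, contradicting the hypothesis that they are not the side lengths of any triangle. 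Hence no such $\zeta_0$ exists.

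I expect the only point requiring care is the bookkeeping around degenerate triangles: the hypothesis ``not the side lengths of any triangle'' must be read as the failure of the (non-strict) triangle inequalities, i.e. one of $|c|>|a|r^n+|b|r^m$, $|b|r^m>|a|r^n+|c|$, $|a|r^n>|b|r^m+|c|$ holds strictly, which matches the three cases in \eqref{eq:casos123}. With that convention the contradiction is immediate, since $z_1+z_2+z_3=0$ forces $|z_j|\le|z_k|+|z_\ell|$ for every choice of indices, with no room for a strict reverse inequality. No genuine obstacle arises here; this lemma is essentially the harmonic analogue of the elementary observation underlying Bohl's method in the classical case, and its proof is short.
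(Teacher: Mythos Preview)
Your proposal is correct and follows essentially the same approach as the paper: both argue by contradiction, observing that if $\zeta_0$ with $|\zeta_0|=r$ were a root then the three terms $a\zeta_0^n$, $b\overline{\zeta}_0^m$, $c$ sum to zero and hence their moduli $|a|r^n$, $|b|r^m$, $|c|$ must satisfy the triangle inequalities. The paper simply writes out the case $|c|>|a|r^n+|b|r^m$ explicitly as $|c|=|a\zeta_0^n+b\overline{\zeta}_0^m|\le |a|r^n+|b|r^m<|c|$ and says the other two cases are analogous, whereas you phrase it via the general principle ``$z_1+z_2+z_3=0$ forces all three triangle inequalities''; the content is identical, and your discussion of the degenerate convention is accurate and matches the paper's usage.
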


\begin{proof}
We use a contradiction argument.
Suppose that $\zeta_0$ is  a root of modulus $r$ of the equation~\eqref{eq:armonicodoble}. 
Assume that $|c|>|a|r^n+|b|r^m$. Since $\zeta_0$ is a root 
of~\eqref{eq:armonicodoble}, we have 
\[
|c|=|-a\zeta^n_0 -b\overline{\zeta}^m_0|=|a\zeta^n_0 +b\overline{\zeta}_0^m|\leq |a|r^n+|b|r^m<|c|,
\]
which is a contradiction.
A similar reasoning applies for the cases $|a|r^n>|b|r^m+|c|$ and  $|b|r^m>|a|r^n+|c|$.
\end{proof}

The preceding lemma motivates the following definition. We set
\begin{equation}\label{eq:defdom}
\mathbb{T}:=\left\{r\in (0,\infty):
|a|r^n,|b|r^m,|c|\,\,\textrm{ are the side lengths of some triangle}
\right\}.
\end{equation}
In~\eqref{eq:defdom} we have excluded the case of degenerate triangles. They are studied separately.
We start analyzing the precise shape of $\mathbb{T}$.
\begin{theorem}[The shape of $\mathbb{T}$]\label{th:ABC}
For any  $r\in [0,\infty)$ let
\begin{equation}\label{eq:ABC}
\begin{split}
&A(r)=|a|r^n-|b|r^m-|c|,\quad B(r)= -|a|r^n+|b|r^m-|c|\\
&\textrm{and }\quad C(r)= -|a|r^n-|b|r^m+|c|.
\end{split}
\end{equation}  
Let $B_{n,m}$ be the maximum of the function $B$ over $[0,\infty)$. 
Let $\mathfrak{a}$ and $\mathfrak{c}$ be the unique  positive  roots of $A$ and  $C$, respectively.
Then the following holds true.
\begin{itemize}
\item[(I)] If $B_{n,m}<0$, it follows that $\mathbb{T}=(\mathfrak{c},\mathfrak{a})$. 
\item[(II)]  If $B_{n,m}=0$, it follows that $\mathbb{T}=(\mathfrak{c}, \mathfrak{b})\cup(\mathfrak{b},\mathfrak{a})$, where $\mathfrak{b}$ is the unique positive root of $B$.
\item[(III)] If $B_{n,m}>0$, then  $B$ has exactly  two positives roots $\mathfrak{b}_1<\mathfrak{b}_2$ and $\mathbb{T}=
(\mathfrak{c}, \mathfrak{b}_1)\cup(\mathfrak{b}_2,\mathfrak{a})$.
\end{itemize}
\end{theorem}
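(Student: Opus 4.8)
The plan is to rewrite the condition defining $\mathbb{T}$ in \eqref{eq:defdom} as a conjunction of three strict inequalities and then reduce everything to elementary one‑variable calculus. Three positive numbers are the side lengths of a non‑degenerate triangle precisely when each of them is strictly smaller than the sum of the other two; applied to the positive numbers $|a|r^n$, $|b|r^m$, $|c|$ (these are positive since $a,b,c\neq 0$ and $r>0$), this says exactly that $A(r)<0$, $B(r)<0$ and $C(r)<0$, with $A$, $B$, $C$ as in \eqref{eq:ABC}. Hence
\[
\mathbb{T}=\{r>0:A(r)<0\}\cap\{r>0:B(r)<0\}\cap\{r>0:C(r)<0\},
\]
and it suffices to describe the sign of each of $A$, $B$, $C$ on $(0,\infty)$.

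I would first treat $C$ and $A$. Since $C'(r)=-n|a|r^{n-1}-m|b|r^{m-1}<0$ for $r>0$, the function $C$ is strictly decreasing, and as $C(0)=|c|>0$ and $C(r)\to-\infty$ it has a unique positive root $\mathfrak{c}$ with $C<0$ exactly on $(\mathfrak{c},\infty)$. For $A$, factoring $A'(r)=r^{m-1}\bigl(n|a|r^{n-m}-m|b|\bigr)$ and using $n>m$ shows $A'$ is negative on an initial interval and positive afterwards, so $A$ is first decreasing then increasing; combined with $A(0)=-|c|<0$ and $A(r)\to+\infty$ this gives a unique positive root $\mathfrak{a}$ with $A<0$ exactly on $[0,\mathfrak{a})$. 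For $B$, the analogous factorization $B'(r)=r^{m-1}\bigl(m|b|-n|a|r^{n-m}\bigr)$ shows $B$ is strictly increasing and then strictly decreasing on $(0,\infty)$, with a unique interior maximizer; since $B(0)=-|c|<0$ and $B(r)\to-\infty$, the number of positive roots of $B$ is dictated by the sign of its maximum $B_{n,m}$: there are none when $B_{n,m}<0$ (so $B<0$ on all of $(0,\infty)$), exactly one (the maximizer $\mathfrak{b}$) when $B_{n,m}=0$, and exactly two $\mathfrak{b}_1<\mathfrak{b}_2$ when $B_{n,m}>0$, in which case $B<0$ exactly on $[0,\mathfrak{b}_1)\cup(\mathfrak{b}_2,\infty)$.

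To position the roots of $B$ relative to $\mathfrak{c}$ and $\mathfrak{a}$ I would invoke the identities
\[
A(r)+B(r)=-2|c|,\qquad B(r)+C(r)=-2|a|r^n,\qquad A(r)+C(r)=-2|b|r^m,
\]
valid for every $r$. Evaluating the last one at $r=\mathfrak{c}$ gives $A(\mathfrak{c})=-2|b|\mathfrak{c}^m<0$, hence $\mathfrak{c}<\mathfrak{a}$; this settles Case (I) at once, where $B<0$ throughout $(0,\infty)$ and therefore $\mathbb{T}=(\mathfrak{c},\mathfrak{a})$. In Cases (II) and (III), evaluating the first two identities at any positive root $\mathfrak{b}$ of $B$ yields $A(\mathfrak{b})=-2|c|<0$ and $C(\mathfrak{b})=-2|a|\mathfrak{b}^n<0$, i.e. $\mathfrak{c}<\mathfrak{b}<\mathfrak{a}$. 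Thus in Case (II) the single root $\mathfrak{b}$ lies in $(\mathfrak{c},\mathfrak{a})$ and $\mathbb{T}=(\mathfrak{c},\mathfrak{a})\setminus\{\mathfrak{b}\}=(\mathfrak{c},\mathfrak{b})\cup(\mathfrak{b},\mathfrak{a})$, while in Case (III) both roots satisfy $\mathfrak{c}<\mathfrak{b}_1<\mathfrak{b}_2<\mathfrak{a}$, so intersecting $(\mathfrak{c},\mathfrak{a})$ with $(0,\mathfrak{b}_1)\cup(\mathfrak{b}_2,\infty)$ gives $\mathbb{T}=(\mathfrak{c},\mathfrak{b}_1)\cup(\mathfrak{b}_2,\mathfrak{a})$.

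I do not anticipate a genuine obstacle: the whole argument is elementary. The only points that require a little care are the monotonicity analysis of $B$ in the case $B_{n,m}>0$ — one must check strict monotonicity on each side of the unique critical point to be sure there are exactly two positive roots and no more — and the bookkeeping of open versus half‑open endpoints when intersecting the three sign sets. The identities relating $A$, $B$ and $C$ are precisely what make the ordering $\mathfrak{c}<\mathfrak{b}_1<\mathfrak{b}_2<\mathfrak{a}$ of the relevant endpoints fall out with no extra computation.
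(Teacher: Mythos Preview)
Your proposal is correct and follows essentially the same approach as the paper: both reduce $\mathbb{T}$ to the simultaneous negativity of $A$, $B$, $C$, analyze the sign of each via its monotonicity on $(0,\infty)$, and use the linear identities among $A$, $B$, $C$ (the paper writes $(A+C)(\mathfrak{c})=-2|b|\mathfrak{c}^m$, $(B+A)(r_0)<0$, $(B+C)(r_0)<0$) to place the roots of $B$ strictly between $\mathfrak{c}$ and $\mathfrak{a}$. The only cosmetic difference is that the paper invokes Descartes' Rule of Signs for the uniqueness of $\mathfrak{a}$ and $\mathfrak{c}$, whereas you argue directly from the derivative; your write‑up is in fact somewhat more explicit than the paper's sketch.
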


\begin{proof}
The proof of Theorem~\ref{th:ABC} is given in \cite{Bohl}. However, for reader's convenience, we sketch it here. By the Rule of Signs of Descartes, the trinomial $A$ has precisely one positive root, that we denote by $\mathfrak{a}$. Similarly, $C$ has precisely one positive root  that we denote $\mathfrak{c}$. A more delicate analysis shows that $B$ could have zero, one or two positive roots. In fact, the last statement depends whether to the maximum $B_{n,m}$ of $B$ is negative, zero or positive, respectively.

We start by showing that $\mathfrak{c}<\mathfrak{a}$. 
We note that the function $A$ satisfies the following:
\begin{itemize}
\item[(a)] The number $r_0= (\frac{m|b|}{n|a|})^{\frac{1}{n-m}}$ is the only positive critical point of the derivative $\frac{\ud A}{\ud r}$. Moreover, the derivative of $A$ with respect to $r$ is negative for $0<r<r_0$ and positive for $r>r_0$.
\item[(b)]  
$A(r)$ is negative  for $0<r<\mathfrak{a}$, and $A(r)$ is positive for $r>\mathfrak{a}$.
\end{itemize}
The function $C$ has the following properties:
\begin{itemize}
\item[(c)]
$C(r)$ is positive for  $0<r<\mathfrak{c}$, and $C$
is negative for $r>\mathfrak{c}$.
\end{itemize}
We also note  that $(A+C)(\mathfrak{c})= -2|b|\mathfrak{c}^m<0$. Since $C(\mathfrak{c})=0$, we have  $A(\mathfrak{c})<0$ which implies $\mathfrak{a}<\mathfrak{c}$. 
Moreover, by Item (b) and Item (c) we deduce 
that $\mathbb{T}\subset (\mathfrak{c},\mathfrak{a})$.

We point out that Lemma~\ref{lemTec2-1} guarantees that the modules of the roots $r$ of an harmonic trinomial lie in the open subset of $(0,\infty)$ determined by the inequalities $A(r)\leq 0$, $B(r)\leq 0$ and $C(r)\leq 0$. 
We observe the following properties for the trinomial $B$:
\begin{itemize}
\item[(d)] The number $r_0$ is the unique critical point of the derivative $\frac{\ud B}{\ud r}$. Moreover, such derivative is positive for $0<r<r_0$ and negative for $r>r_0$.
\item[(e)] $B_{n,m}=B(r_0)\in \mathbb{R}$.
\end{itemize}
First, we prove Item (I). We assume that $B_{n,m}<0$ let $r\in (\mathfrak{c},\mathfrak{a})$ be fixed. 
Since $B_{n,m}<0$, we have that $B(r)<0$. 
Hence, by~\eqref{eq:defdom} we obtain $(\mathfrak{c},\mathfrak{a})\subset \mathbb{T}$.

Now, we prove Item (II). Since $B_{n,m}=0$,  we have that $r_0=\mathfrak{b}$ and hence $B(r)<0$ for any $r\in (0,r_0)\cup (r_0,\infty)$. Note that 
$(B+A)(r_0)<0$ and $(B+C)(r_0)<0$. By Item (b) and Item (c) we obtain $r_0=\mathfrak{b}\in (\mathfrak{c},\mathfrak{a})$ and then $\mathbb{T}=(\mathfrak{c}, \mathfrak{b})\cup(\mathfrak{b},\mathfrak{a})$.

Finally, we prove Item (III). Since $B_{n,m}>0$, we have that $B$ has exactly two positive roots $\mathfrak{b}_1$ and $\mathfrak{b}_2$ with $\mathfrak{b}_1<\mathfrak{b}_2$ . Moreover, $B(r)\leq 0$ for $r\in (0,\mathfrak{b}_1]\cup [\mathfrak{b}_2,\infty)$. An analogous reasoning used in the proof of Item (II) yields
$\mathbb{T}=
(\mathfrak{c}, \mathfrak{b}_1)\cup(\mathfrak{b}_2,\mathfrak{a})$.
\end{proof}

In the sequel, we analyze the boundary of 
$\mathbb{T}$. The proofs of the following three lemmas are straightforward and  we omit them.
\begin{lemma}[The boundary cases]\label{lem:boundedcases}
Let keep the notation introduced in Theorem~\ref{th:ABC}. Then the following is valid.
\begin{itemize}
\item[(i)]  $r= \mathfrak{c}$ if and only if $|c|= |a|r^n+|b|r^m$.
\item[(ii)] $r= \mathfrak{a}$ if and only if $|a|r^n= |b|r^m+|c|$.
\item[(iii)]  Assume $B_{n,m}=0$. Then $r= \mathfrak{b}$, if and only if $|b|r^m= |a|r^n+|c|$.
\item[(iv)]  Assume $B_{n,m}>0$. Then $r\in \{\mathfrak{b}_1,\mathfrak{b}_2\}$ if and only if $|b|r^m = |a|r^n+|c|$.
\end{itemize}
\end{lemma}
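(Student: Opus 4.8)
The plan is to translate each of the four boundary conditions into the statement that one of the trinomials $A$, $B$, $C$ of Theorem~\ref{th:ABC} vanishes at $r$, and then to read off the set of positive zeros of that trinomial from the sign analysis carried out in the proof of Theorem~\ref{th:ABC}.

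First, for Item~(i) I would observe that the equation $C(r)=0$ is literally $|c|=|a|r^n+|b|r^m$. Since $C(0)=|c|>0$, the map $r\mapsto C(r)$ is strictly decreasing on $(0,\infty)$ (both $-|a|r^n$ and $-|b|r^m$ are strictly decreasing), and $C(r)\to-\infty$ as $r\to\infty$, the trinomial $C$ has exactly one positive zero, which is $\mathfrak{c}$ by definition; hence $r=\mathfrak{c}$ if and only if $C(r)=0$. Item~(ii) is identical with $A$ in place of $C$: the equation $A(r)=0$ reads $|a|r^n=|b|r^m+|c|$, and by the monotonicity of $A$ recorded in the proof of Theorem~\ref{th:ABC} (or directly from $A(0)=-|c|<0$, $A(r)\to+\infty$ as $r\to\infty$, and Descartes' rule of signs) the trinomial $A$ has the single positive zero $\mathfrak{a}$.

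For Items~(iii) and~(iv) I would use $B$, whose vanishing $B(r)=0$ is precisely $|b|r^m=|a|r^n+|c|$. The key input, again from the proof of Theorem~\ref{th:ABC}, is that $B$ is strictly increasing on $(0,r_0)$ and strictly decreasing on $(r_0,\infty)$, where $r_0$ is the unique critical point of $B$ and $B(r_0)=B_{n,m}$, while $B(0)=-|c|<0$ and $B(r)\to-\infty$ as $r\to\infty$ (here $n>m$ is used). If $B_{n,m}=0$, the maximum value $0$ is attained only at $r_0$, so $B$ has the unique positive zero $\mathfrak{b}=r_0$, which gives Item~(iii). If $B_{n,m}>0$, the intermediate value theorem applied separately on $(0,r_0)$ and on $(r_0,\infty)$ yields exactly the two zeros $\mathfrak{b}_1<\mathfrak{b}_2$, and strict monotonicity on each of these intervals shows there are no others, so $B(r)=0$ with $r>0$ if and only if $r\in\{\mathfrak{b}_1,\mathfrak{b}_2\}$, which gives Item~(iv).

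I do not expect a genuine obstacle; the argument is routine once the three trinomials are identified. The only point requiring care --- and the reason the equivalences are not entirely trivial --- is that it does not suffice to note that each displayed equality is equivalent to ``$A(r)=0$'', ``$B(r)=0$'' or ``$C(r)=0$'': one must also invoke the monotonicity and sign information from Theorem~\ref{th:ABC}, so as to know that the full zero set of each trinomial on $(0,\infty)$ is exactly $\{\mathfrak{a}\}$, $\{\mathfrak{c}\}$, $\{\mathfrak{b}\}$ or $\{\mathfrak{b}_1,\mathfrak{b}_2\}$, which is what is needed to pass from the vanishing of the trinomial back to $r$ being the prescribed zero.
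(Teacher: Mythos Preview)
Your proposal is correct and matches the paper's intended approach: the paper explicitly omits the proof as ``straightforward'', and the argument it has in mind is precisely the reduction to the zero sets of $A$, $B$, $C$ together with the sign and shape information established in the proof of Theorem~\ref{th:ABC}. One small slip: $A$ is not monotone on $(0,\infty)$ (it decreases on $(0,r_0)$ and increases on $(r_0,\infty)$, as recorded in the proof of Theorem~\ref{th:ABC}); your parenthetical appeal to Descartes' rule of signs (or to item~(b) in that proof, which says $A<0$ on $(0,\mathfrak{a})$ and $A>0$ on $(\mathfrak{a},\infty)$) is the correct justification for the uniqueness of $\mathfrak{a}$.
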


In what follows we analyze $\overline{\mathbb{T}}^\textnormal{c}$.
\begin{lemma}[The complement of $\overline{\mathbb{T}}$]\label{lem:complemento}
Let keep the notation introduced in Theorem~\ref{th:ABC}. Then the following is valid.
\begin{itemize}
\item[(i)]  $r\in(0,\mathfrak{c})$ if and only if $|c|>|a|r^n+|b|r^m$.
\item[(ii)]  $r\in(\mathfrak{a},\infty)$ if and only if $|a|r^m>|b|r^m+|c|$.
\item[(iii)]  $r\in (\mathfrak{b}_1,\mathfrak{b}_2)$ and $B(r)\neq B_{n,m}$ if and only if $|b|r^m>|a|r^n+|c|$.
\end{itemize}
\end{lemma}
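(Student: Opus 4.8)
\emph{Proof plan for Lemma~\textnormal{\ref{lem:complemento}}.} The idea is to reduce each of the three equivalences to a statement about the sign of exactly one of the auxiliary trinomials $A$, $B$, $C$ from \eqref{eq:ABC}, and then read off the answer from the monotonicity and sign information already assembled in the proof of Theorem~\ref{th:ABC} (items (a)--(e) there). Indeed, the strict inequality $|c|>|a|r^n+|b|r^m$ is literally $C(r)>0$; the inequality $|a|r^n>|b|r^m+|c|$ is literally $A(r)>0$; and $|b|r^m>|a|r^n+|c|$ is literally $B(r)>0$. Thus the whole lemma amounts to the three identities $\{r>0:C(r)>0\}=(0,\mathfrak{c})$, $\{r>0:A(r)>0\}=(\mathfrak{a},\infty)$, and $\{r>0:B(r)>0\}=(\mathfrak{b}_1,\mathfrak{b}_2)$, each of which is immediate from the shape of the corresponding trinomial.

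For part (i): since the derivative of $C(r)=|c|-|a|r^n-|b|r^m$ equals $-n|a|r^{n-1}-m|b|r^{m-1}$, which is negative for all $r>0$, the function $C$ is strictly decreasing on $[0,\infty)$ with $C(0)=|c|>0$ and $C(\mathfrak{c})=0$; hence $C(r)>0$ exactly on $[0,\mathfrak{c})$, and intersecting with $(0,\infty)$ gives $r\in(0,\mathfrak{c})$ if and only if $|c|>|a|r^n+|b|r^m$. For part (ii): by item (b) in the proof of Theorem~\ref{th:ABC}, $A(r)<0$ for $0<r<\mathfrak{a}$ and $A(r)>0$ for $r>\mathfrak{a}$, so $A(r)>0$ if and only if $r\in(\mathfrak{a},\infty)$; that is, $r\in(\mathfrak{a},\infty)$ if and only if $|a|r^n>|b|r^m+|c|$ (the exponent of $r$ on the left being $n$).

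For part (iii) I work in the setting of Theorem~\ref{th:ABC}(III), where $B_{n,m}>0$ and $B$ has exactly two positive zeros $\mathfrak{b}_1<\mathfrak{b}_2$. By items (d)--(e) there, $B$ is strictly increasing on $(0,r_0)$ and strictly decreasing on $(r_0,\infty)$ with $B(0)=-|c|<0$ and maximum $B_{n,m}=B(r_0)>0$; consequently $r_0\in(\mathfrak{b}_1,\mathfrak{b}_2)$, $B(r)>0$ precisely for $r\in(\mathfrak{b}_1,\mathfrak{b}_2)$, and $B(r)=B_{n,m}$ only at $r=r_0$. This yields $|b|r^m>|a|r^n+|c|$ if and only if $r\in(\mathfrak{b}_1,\mathfrak{b}_2)$. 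The extra clause $B(r)\neq B_{n,m}$ then singles out $(\mathfrak{b}_1,r_0)\cup(r_0,\mathfrak{b}_2)$, where in addition $B$ is strictly monotone; the apex $r_0$, which also satisfies $|b|r_0^m>|a|r_0^n+|c|$, is deliberately set aside because it is the point where the interior angles of the triangle cease to be monotone in $r$ and is treated separately in Section~\ref{sec:proof}. This apex in (iii) is the one spot requiring a moment of care; parts (i), (ii) and the bulk of (iii) are a direct application of the sign behavior of $A$, $B$, $C$, which is why the authors call the proof straightforward.
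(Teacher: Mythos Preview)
Your approach is exactly the intended one: the paper omits the proof entirely, saying only that ``the proofs of the following three lemmas are straightforward and we omit them,'' and the natural way to fill in the details is precisely to translate each inequality into a sign condition on $A$, $B$, $C$ and invoke the monotonicity information from items (a)--(e) in the proof of Theorem~\ref{th:ABC}. Your arguments for (i) and (ii) are clean and correct.

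Your discussion of (iii) is also correct, and in fact you have spotted a genuine imprecision in the statement as written: the equivalence ``$r\in(\mathfrak{b}_1,\mathfrak{b}_2)$ and $B(r)\neq B_{n,m}$ if and only if $|b|r^m>|a|r^n+|c|$'' fails in the backward direction at the single point $r=r_0$, since $B(r_0)=B_{n,m}>0$ there. The clean statement is $\{r>0:B(r)>0\}=(\mathfrak{b}_1,\mathfrak{b}_2)$, which is what you prove and what is actually used elsewhere in the paper (compare \eqref{eq:casos123} and Lemma~\ref{lem:notriangle}(iii), which both treat the full closed interval $[\mathfrak{b}_1,\mathfrak{b}_2]$ without excising $r_0$). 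Your explanation that the clause $B(r)\neq B_{n,m}$ is meant to set aside the apex for separate treatment is a reasonable reading of the authors' intent, though strictly speaking it makes the ``if and only if'' inexact. This is a wrinkle in the paper's formulation, not in your proof.
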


\begin{lemma}\label{lem:notriangle}
Let keep the notation introduced in Theorem~\ref{th:ABC}.
Then it follows that
\begin{itemize}
\item[(i)] If $r\in (0,\mathfrak{c}]$, then $\om_*(r)=0$.
\item[(ii)] If $r\in [\mathfrak{a},\infty)$, then $\om_*(r)=\frac{n}{2}$.
\item[(iii)] If $r=\mathfrak{b}$ or $r\in [\mathfrak{b}_1,\mathfrak{b}_2]$, then 
 $\om_*(r)= -\frac{m}{2}$.
\end{itemize}
\end{lemma}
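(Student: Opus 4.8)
The plan is to reduce the whole statement to determining, for each $r$ in the three indicated ranges, which of the three comparisons among $|a|r^n$, $|b|r^m$ and $|c|$ holds, and then to read off $\om_*(r)$ from the definition recorded in the Remark following Theorem~\ref{th:bohlharm}: there $\om_*(r)$ is set equal to $0$ when $|c|\geq|a|r^n+|b|r^m$, to $\frac{n}{2}$ when $|a|r^n\geq|b|r^m+|c|$, and to $-\frac{m}{2}$ when $|b|r^m\geq|a|r^n+|c|$ (the equality subcases being the degenerate triangles and the strict ones the non-triangles). So no new idea is needed beyond the sign analysis of the trinomials $A$, $B$, $C$ of~\eqref{eq:ABC} already carried out in the proof of Theorem~\ref{th:ABC}.

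For Item~(i) I would use the sign behaviour of $C$ recorded there (property~(c)): $C(r)>0$ for $0<r<\mathfrak{c}$ and $C(\mathfrak{c})=0$, so $C(r)\geq 0$ on $(0,\mathfrak{c}]$, i.e.\ $|c|\geq|a|r^n+|b|r^m$, and the first clause of the Remark gives $\om_*(r)=0$; equivalently one combines Lemma~\ref{lem:complemento}(i) with Lemma~\ref{lem:boundedcases}(i). For Item~(ii), property~(b) of the same proof gives $A(r)<0$ for $0<r<\mathfrak{a}$ and $A(\mathfrak{a})=0$, hence $A(r)\geq 0$ on $[\mathfrak{a},\infty)$, i.e.\ $|a|r^n\geq|b|r^m+|c|$, so the second clause of the Remark gives $\om_*(r)=\frac{n}{2}$. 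For Item~(iii) I would split on $B_{n,m}$ exactly as in Theorem~\ref{th:ABC}: if $B_{n,m}=0$ then $\mathfrak{b}=r_0$ and $B(\mathfrak{b})=B(r_0)=B_{n,m}=0$, so $|b|\mathfrak{b}^m=|a|\mathfrak{b}^n+|c|$; if $B_{n,m}>0$ then $\mathfrak{b}_1<\mathfrak{b}_2$ are the two positive zeros of $B$ and $B$ is nonnegative on $[\mathfrak{b}_1,\mathfrak{b}_2]$, so $|b|r^m\geq|a|r^n+|c|$ on that interval. In both cases the third clause of the Remark yields $\om_*(r)=-\frac{m}{2}$.

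To make the argument independent of the particular continuous extension chosen in the Remark, I would also verify that the degenerate-triangle values $0$, $\frac{n}{2}$, $-\frac{m}{2}$ are forced: by Lemma~\ref{eq:continuity} the angles $\om_1$, $\om_2$ depend continuously on $r$, and in the law of cosines $\cos\om_1=\frac{|b|^2r^{2m}+|c|^2-|a|^2r^{2n}}{2|b|r^m|c|}$ and $\cos\om_2=\frac{|a|^2r^{2n}+|c|^2-|b|^2r^{2m}}{2|a|r^n|c|}$ the identity ``one side equals the sum of the other two'' forces the opposite angle to $\pi$ and the remaining two to $0$; substituting into $\om_*(r)=\frac{n\om_1-m\om_2}{2\pi}$ from~\eqref{eq:pivotaltes} reproduces the three values. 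I do not expect a genuine obstacle here: the content is pure bookkeeping, and the only point requiring care is keeping the three regimes of Theorem~\ref{th:ABC} (the sign of $B_{n,m}$, hence whether one has a single $\mathfrak{b}$ or a pair $\mathfrak{b}_1,\mathfrak{b}_2$) aligned with the three cases of the lemma.
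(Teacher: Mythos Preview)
Your proposal is correct and in line with the paper: the paper explicitly omits the proof of this lemma as straightforward, and what you outline is exactly the intended bookkeeping, namely reading off the sign of $A$, $B$, $C$ on the relevant ranges (as established in the proof of Theorem~\ref{th:ABC} and in Lemmas~\ref{lem:boundedcases} and~\ref{lem:complemento}) and then invoking the extended definition of $\om_*$ from the Remark after Theorem~\ref{th:bohlharm}. Your extra verification that the degenerate-triangle values are forced by continuity is a nice touch, but it is already the content of the Remark itself, so nothing further is required.
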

Now, we show that in the generic case (iii) of Proposition~\ref{eq:prop} and in the boundary of $\mathbb{T}$, there is no roots for the harmonic trinomial. This is precisely stated in the following lemma.
\begin{lemma}\label{eq:casogenerico09} 
Assume that $n\beta + m\alpha$ is not an integer multiple of $\pi$ and for some $r>0$, the numbers $|a|r^n$, $|b|r^m$  and $|c|$ are the side lengths of some degenerate triangle. Then there is no  root of the harmonic 
trinomial~\eqref{eq:hec} of modulus $r$.
\end{lemma}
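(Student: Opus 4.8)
The plan is to argue by contradiction. Suppose $\zeta_0$ is a root of~\eqref{eq:hec} with $|\zeta_0|=r$, and write $\zeta_0=re^{\ii\phi}$, $a=|a|e^{\ii\alpha}$, $b=|b|e^{\ii\beta}$. Substituting into $f(\zeta_0)=0$ exactly as in~\eqref{eq:trigoeq} gives
\[
z_1+z_2+z_3=0,\qquad z_1:=|a|r^ne^{\ii(\alpha+n\phi)},\quad z_2:=|b|r^me^{\ii(\beta-m\phi)},\quad z_3:=c>0,
\]
and by hypothesis $|z_1|=|a|r^n$, $|z_2|=|b|r^m$, $|z_3|=c$ are the side lengths of a degenerate triangle, i.e.\ one of these three numbers equals the sum of the other two.

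The first step is to show that $z_1$ and $z_2$ are real numbers. By Lemma~\ref{lem:boundedcases} the degeneracy is one of the three cases $c=|a|r^n+|b|r^m$, $|a|r^n=|b|r^m+c$, $|b|r^m=|a|r^n+c$. In each case exactly one of the relations $z_j=-(z_k+z_\ell)$ saturates the triangle inequality $|z_j|=|z_k+z_\ell|\le|z_k|+|z_\ell|$, which forces $z_k$ and $z_\ell$ to be nonnegative real multiples of one another (all of $a,b,c$ and $r$ are nonzero, so none of the $z_j$ vanishes); hence all three $z_j$ lie on a common line through the origin. Since $z_3=c$ is a positive real, that line is $\mathbb{R}$, so $z_1,z_2\in\mathbb{R}\setminus\{0\}$.

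Therefore $e^{\ii(\alpha+n\phi)}=\pm1$ and $e^{\ii(\beta-m\phi)}=\pm1$, that is, $\alpha+n\phi=\pi k_1$ and $\beta-m\phi=\pi k_2$ for some $k_1,k_2\in\mathbb{Z}$. Multiplying the first identity by $m$, the second by $n$, and adding eliminates $\phi$ and yields $m\alpha+n\beta=\pi(mk_1+nk_2)$, so $n\beta+m\alpha$ is an integer multiple of $\pi$, contradicting the hypothesis; this proves the lemma. I do not expect a genuine obstacle here: the only point requiring care is the equality case of the triangle inequality across the three degenerate sub-cases, which is elementary, and the rest is the same $\phi$-elimination already used in Propositions~\ref{prop:pomega} and~\ref{prop:vuelta}.
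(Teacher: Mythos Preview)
Your proof is correct and follows essentially the same route as the paper: argue by contradiction, write everything in polar form, deduce that the two non-real arguments $\alpha+n\phi$ and $\beta-m\phi$ are integer multiples of $\pi$, and then eliminate $\phi$ to force $n\beta+m\alpha\in\pi\mathbb{Z}$. The only cosmetic difference is that the paper treats the three degenerate cases separately and invokes Lemma~\ref{prop:congru} (with the triangle angles $\om_1,\om_2$ degenerating to $0$ or $\pi$), whereas you handle all three at once via the equality case of the triangle inequality; your version is slightly more self-contained but not a different argument.
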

\begin{proof}
We use a contradiction argument.
Let $r= \mathfrak{c}$, $\zeta_0= re^{i\theta}$ and assume that $\zeta_0$ is a root of harmonic trinomial  equation. Then we have
\begin{equation}\label{eq:Tgh}
|a|r^n e^{\ii(n\theta+\alpha)}+|b|r^m e^{\ii(-m\theta +\beta)}+|c|=0.
\end{equation}
Lemma~\ref{lem:boundedcases} implies $|c|= |a|r^n+|b|r^m$. By~\eqref{eq:Tgh} and 
Lemma~\ref{prop:congru} in Appendix~\ref{append} we obtain $\alpha+n\theta\equiv \pi $ and $\beta -m\theta\equiv \pi$. Hence, 
$\alpha= -n\theta +\pi +2\pi k_1$ and $\beta=\pi +m\theta +2\pi k_2$ 
for some $k_1, k_2\in \mathbb{Z}$.
By straightforward computations we have
$n\beta+m\alpha=  \pi(n+m +2k_2n+ 2k_1m)$
which yields a contradiction.
A similar reasoning applies for the cases 
$|a|r^n=|b|r^m+|c|$ and  $|b|r^m=|a|r^n+|c|$.
\end{proof}
Now, we prove that the function $\omega_*$ is piece-wise monotone.
\begin{lemma}\label{lem:omegaderivada}
The derivative of the function $\om_*:\mathbb{T}\rightarrow \mathbb{R}$ satisfies the following:
\begin{equation}\label{eq:omegaprime}
\begin{split}
&\frac{\ud}{\ud r}\om_*(r)<0\quad \textrm{ for }\quad r\in (\mathfrak{c},r_0)\cap \mathbb{T},\\
&
\frac{\ud}{\ud r}\om_*(r)>0\quad \textrm{ for }\quad r\in (r_0,\mathfrak{a})\cap \mathbb{T},
\end{split}
\end{equation}
where $r_0= (\frac{m|b|}{n|a|})^{\frac{1}{n-m}}$.
\end{lemma}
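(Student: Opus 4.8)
The plan is to differentiate $\om_*$ directly and to show its sign is governed by the polynomial $n^2|a|^2r^{2n}-m^2|b|^2r^{2m}$, whose unique positive zero is $r_0$. Throughout, fix $r\in\mathbb{T}$, abbreviate $x=x(r):=|a|r^n$, $y=y(r):=|b|r^m$ and $z:=|c|$ (constant in $r$), so that $x,y,z$ are the side lengths of a genuine (non-degenerate) triangle; write $T=T(r)>0$ for its area. Note $x'=nx/r$ and $y'=my/r$, and that $x,y,z,T,\om_1,\om_2$ are smooth on each connected component of $\mathbb{T}$ (by Lemma~\ref{eq:continuity} in Appendix~\ref{append}, $\om_1$ and $\om_2$ are at least continuous, and on $\mathbb{T}$ the law of cosines below makes them smooth).

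First I would use the law of cosines, $\cos\om_1=\dfrac{y^2+z^2-x^2}{2yz}$ and $\cos\om_2=\dfrac{x^2+z^2-y^2}{2xz}$, together with the area identities $\sin\om_1=\dfrac{2T}{yz}$ and $\sin\om_2=\dfrac{2T}{xz}$ (valid because $\om_1$ is the angle enclosed by the sides of lengths $y$ and $z$, and $\om_2$ the one enclosed by the sides of lengths $x$ and $z$). Differentiating the two cosine identities in $r$, remembering that $z$ is constant, and dividing by $-\sin\om_i$ yields
\[
\om_1'=-\frac{(x^2+y^2-z^2)\,y'-2xy\,x'}{4yT},\qquad
\om_2'=-\frac{(x^2+y^2-z^2)\,x'-2xy\,y'}{4xT}.
\]
Substituting $x'=nx/r$ and $y'=my/r$ into $2\pi\,\om_*'(r)=n\om_1'-m\om_2'$, the contributions proportional to $x^2+y^2-z^2$ cancel (since $ny'/y=mx'/x=nm/r$) and one is left with
\[
2\pi\,\frac{\ud}{\ud r}\om_*(r)=\frac{n^2x^2-m^2y^2}{2Tr}=\frac{n^2|a|^2r^{2n}-m^2|b|^2r^{2m}}{2Tr}.
\]

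Since $T>0$ for $r\in\mathbb{T}$ and $r>0$, the sign of $\tfrac{\ud}{\ud r}\om_*(r)$ coincides with the sign of $n^2|a|^2r^{2n}-m^2|b|^2r^{2m}=r^{2m}\bigl(n^2|a|^2r^{2(n-m)}-m^2|b|^2\bigr)$, which is negative for $0<r<r_0$ and positive for $r>r_0$, where $r_0=\bigl(\tfrac{m|b|}{n|a|}\bigr)^{1/(n-m)}$. Intersecting with $\mathbb{T}$ (and recalling $\mathbb{T}\subset(\mathfrak{c},\mathfrak{a})$ from Theorem~\ref{th:ABC}) gives precisely~\eqref{eq:omegaprime}. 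The only real work is the algebraic simplification in the two displays above, in particular checking the cancellation of the $x^2+y^2-z^2$ terms; the mild subtlety that $\mathbb{T}$ may be disconnected (cases (II) and (III) of Theorem~\ref{th:ABC}) is harmless, since the identity for $\om_*'$ is pointwise and the sign discussion applies verbatim on every component of $\mathbb{T}$.
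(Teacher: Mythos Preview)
Your proof is correct and follows essentially the same approach as the paper: both differentiate the law-of-cosines identities to obtain $\om_1'$ and $\om_2'$, then combine them into a closed form for $2\pi\,\om_*'$ whose sign is determined by $n^2|a|^2r^{2n}-m^2|b|^2r^{2m}$. The only cosmetic difference is that you package the common denominator as the triangle area $T$ (via $\sin\om_i=2T/(\text{adjacent sides})$), whereas the paper uses the law of sines to introduce $\delta=|b||c|r^m\sin\om_1=|a||c|r^n\sin\om_2$; since $\delta=2T$, the two formulas agree.
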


\begin{proof}
We compute the derivative of $\om_*$ in the  domain $\mathbb{T}$. For any $r\in \mathbb{T}$,
the law of cosines yields
\begin{equation}\label{eq:GHT}
\begin{split}
|a|^2 r^{2n}&=|b|^2r^{2m}+|c|^2 -2|b||c|r^m \cos(\om_1(r)),\\
|b|^2 r^{2m}&=|a|^2 r^{2n}+|c|^2 -2|a||c|r^n \cos(\om_2(r)).
\end{split}
\end{equation}
For shorthand we write $\om_1$ and $\om_2$ instead of $\om_1(r)$ and $\om_2(r)$.
By  straightforward computations we obtain
\begin{equation}\label{eq:hoi}
\begin{split}
\frac{\ud \om_1}{\ud r}
&=\frac{n|a|^2 r^{2n-1}-m|b|^2 r^{2m-1}+m|b||c|r^{m-1}\cos(\om_1)}{|b||c|r^m \sin(\om_1)},\\
\frac{\ud \om_2}{\ud r}&=\frac{m|b|^2 r^{2m-1}-n|a|^2 r^{2n-1}+n|a||c|r^{n-1}\cos(\om_2)}{|a||c|r^n \sin(\om_2)}.
\end{split}
\end{equation}
The law of sines implies $\delta:=|b||c|r^m\sin(\om_1)=|a||c|r^n\sin(\om_2)>0$.
Hence, \eqref{eq:hoi} yields
\begin{equation}
\begin{split}
2\pi \frac{\ud \om_*}{\ud r}&=
n\frac{\ud \om_1}{\ud r}-m\frac{\ud \om_2}{\ud r}
=\frac{(n|a|r^n-m|b|r^m)(n|a|r^n+m|b|r^m)}{2r\delta},
\end{split}
\end{equation}
which has a unique critical point at $r_0= (\frac{m|b|}{n|a|})^{\frac{1}{n-m}}$.
This easily implies~\eqref{eq:omegaprime}.
\end{proof}

\section{Proof of Theorem~\ref{th:bohlharm}}\label{sec:proof}
In this section, we stress the fact that Theorem~\ref{th:bohlharm} is just a consequence of what we have already proved in Section~\ref{sec:bohlmethod}.
\begin{proof}[Proof of 
Theorem~\ref{th:bohlharm}]
We use the same notation introduced in Theorem~\ref{th:ABC}. 
Now, we give with the proof of Case~(I).
We recall that
$\mathbb{T}=(\mathfrak{c},\mathfrak{a})$ and $\mathfrak{b}=\mathfrak{r}_0= (\frac{m|b|}{n|a|})^{\frac{1}{n-m}}$.
By Lemma~\ref{lem:notriangle} we have $\om_*(\mathfrak{c})=0$, $\om_*(\mathfrak{r}_0)=-m/2$ and $\om_*(\mathfrak{a})=n/2$. Then  Lemma~\ref{lem:omegaderivada} with the help of the Intermediate value theorem implies the existence of a unique 
 $\mathfrak{r}_1\in (\mathfrak{r}_0,\mathfrak{a})$ such that  $\om_*(\mathfrak{r}_1)=0$.
On the one hand,
by Lemma~\ref{lem:notriangle} we have 
$\omega_*(r)=0$ for $r\in (0,\mathfrak{c}]$, and hence the right-hand side of
\eqref{eq:formula} is equal to zero.
On the other hand, Lemma~\ref{lemTec2-1} and Item~(i) of Lemma~\ref{lem:boundedcases} imply that
the left-hand side of~\eqref{eq:formula} is equal to zero.
This yields~\eqref{eq:formula} for any $r\in (0, \mathfrak{c}]$.
 
In what follows, we assume that 
\begin{equation}\label{eq:asump1}
n\beta +m\alpha\quad  \textrm{ is not an integer multiple of } \pi.
\end{equation}
By Item (iv) of Proposition~\ref{eq:prop}, \eqref{eq:asump1} is equivalent to
$2P_*\not\in \mathbb{Z}$.
We note that $P_*\not\in \mathbb{Z}$.
We continue with the proof of~\eqref{eq:formula} for $r>\mathfrak{c}$.
The analysis 
is divided in the following two sub-cases.

Case~A.1:
Assume that the numbers
$P_*-\om_*(r)$ and $P_*+\om_*(r)$ are not integers for some $r\in \mathbb{T}$. 
Assume that $r\in J_1:=(\mathfrak{c},\mathfrak{b}]$.
By \eqref{eq:asump1},
Proposition~\ref{prop:pomega}, Proposition~\ref{prop:vuelta} and Lemma~\ref{lem:omegaderivada} we have
\[
\Card(\mathcal{Z}_{f}(r))=\Card_{\mathbb{Z}}(P_*-\om_*(0,r))+\Card_{\mathbb{Z}}(P_*+\om_*(0,r)).
\]
In particular, since $2P_*\not \in \mathbb{Z}$, for $r=\mathfrak{b}$ 
Lemma~\ref{eq:casogenerico09}  yields that $f$ does not have any root of modulus $\mathfrak{b}$
and hence 
$\Card(\mathcal{Z}_{f}(\mathfrak{b}))=m$.

Next, we assume that $r\in J_2:=(\mathfrak{b},\mathfrak{r}_1)$. We observe that
\begin{equation} 
\begin{split}
\Card_{\mathbb{Z}}(P_*-\om_*(0,r))&=\Card_{\mathbb{Z}}(P_*-\om_*(0,\mathfrak{b}])+ \Card_{\mathbb{Z}}(P_*-\om_*(\mathfrak{b},r)),\\
\Card_{\mathbb{Z}}(P_*+\om_*(0,r))&=\Card_{\mathbb{Z}}(P_*+\om_*(0,\mathfrak{b}])+ \Card_{\mathbb{Z}}(P_*+\om_*(\mathfrak{b},r)),
\end{split}
\end{equation}
which implies
\begin{equation} 
\begin{split}
&\Card_{\mathbb{Z}}(P_*-\om_*(0,r))
+\Card_{\mathbb{Z}}(P_*+\om_*(0,r))=
\Card_{\mathbb{Z}}(P_*-\om_*(0,\mathfrak{b}])\\
&\qquad\,+
\Card_{\mathbb{Z}}(P_*+\om_*(0,\mathfrak{b}])
+
\Card_{\mathbb{Z}}(P_*-\om_*(\mathfrak{b},r))+
\Card_{\mathbb{Z}}(P_*+\om_*(\mathfrak{b},r))\\
&\quad=m+\Card_{\mathbb{Z}}(P_*-\om_*(\mathfrak{b},r))+
\Card_{\mathbb{Z}}(P_*+\om_*(\mathfrak{b},r)).
\end{split}
\end{equation}
In particular, by  Lemma~\ref{eq:casogenerico09} for $r=\mathfrak{r}_1$ we obtain 
$\Card(\mathcal{Z}_{f}(\mathfrak{r}_1))=m+m=2m$.

Finally, we assume that $r> \mathfrak{r}_1$.
Similar reasoning implies
\begin{equation} 
\begin{split}
&\Card_{\mathbb{Z}}(P_*-\om_*(0,r))
+\Card_{\mathbb{Z}}(P_*+\om_*(0,r))\\
&\qquad=2m+\Card_{\mathbb{Z}}(P_*-\om_*(\mathfrak{r}_1,r))+
\Card_{\mathbb{Z}}(P_*+\om_*(\mathfrak{r}_1,r)).
\end{split}
\end{equation}
In particular, Lemma~\ref{eq:casogenerico09} for $r=\mathfrak{a}$ we obtain 
$\Card(\mathcal{Z}_{f}(\mathfrak{a}))=2m+n$.
 
Case~A.2: 
Assume that the numbers
 $P_*-\om_*(r)$ or $P_*+\om_*(r)$ are integers for some $r\in \mathbb{T}$. We stress that
$P_*-\om_*(r)$ and $P_*+\om_*(r)$ cannot be both integers.
Without loss of generality, one can assume that $P_*+\om_*(r)$ is an integer. 
Since the number of roots for any harmonic trinomial is finite,
for $0<\tilde{r}<r$ and $\tilde{r}$ sufficiently close to $r$ by continuity we obtain that 
$P_*-\om_*(\tilde{r})$ and $P_*+\om_*(\tilde{r})$ are not integers,
\begin{equation}
\begin{split}
\Card_{\mathbb{Z}}(P_*-\om_*(0,r))&=\Card_{\mathbb{Z}}(P_*-\om_*(0,\tilde{r})),\\
\Card_{\mathbb{Z}}(P_*+\om_*(0,r))&=\Card_{\mathbb{Z}}(P_*+\om_*(0,\tilde{r})).
\end{split}
\end{equation}
Hence Case~A.1 implies
$
\Card(\mathcal{Z}_{f}(r))=\Card(\mathcal{Z}_{f}(\tilde{r}))
$ and we conclude~\eqref{eq:formula}.
Case~A.1 and Case~A.2 complete the proof for the Case~A under~\eqref{eq:asump1}.

In the sequel, we assume that
\begin{equation}\label{eq:asump2}
n\beta +m\alpha\quad  \textrm{ is an integer multiple of } \pi.
\end{equation}
We analyze the following two cases.

Case~B.1: If $P_*-\om_*(r)$ and $P_*+\om_*(r)$ are not integers for some $r\in \mathbb{T}$, then the proof is similar to the Case~A.1.

Case~B.2:
If $P_*-\om_*(r)$ or $P_*+\om_*(r)$ are integers for some $r\in \mathbb{T}$, then Item~(i) and Item~(ii) of Proposition~\ref{eq:prop} imply that $P_*-\om_*(r)$ and $P_*+\om_*(r)$ are integers. 
Recall that $n\alpha+m \beta$ 
satisfies~\eqref{eq:asump2}.
By perturbation we choose $\widetilde{\alpha}$ and $\widetilde{\beta}$ such that 
$n\widetilde{\alpha}+m\widetilde{\beta}$ satisfies~\eqref{eq:asump1}. For such choice, 
the associated harmonic trinomial $g$ is given by
$
g(\zeta)=|a|e^{\ii \widetilde{\alpha}}\zeta^n+
|b|e^{\ii \widetilde{\beta}}\overline{\zeta}^m+c,
$ $\zeta\in \mathbb{C}$
and it has corresponding pivots 
\[
P^g_*=\frac{n(\widetilde{\beta}-\pi)+m(\widetilde{\alpha}-\pi)}{2\pi}
\quad \textrm{ and }\quad
\omega^g_*(r)= \frac{n\om_1-m\om_2}{2\pi}.
\]
We stress that $\omega^g_*(r)=\omega_*(r)$ which is defined in~\eqref{eq:pivotaltes}.
In addition, such $\widetilde{\alpha}$ and $\widetilde{\beta}$ can be chosen to satisfy $0<P^g_*-P_*<1/4$, and  $P^g_*-\omega_*(r)$, $P^g_*+\omega_*(r)$ are not integers.
The preceding choice implies
\begin{equation}\label{eq:choice}
\begin{split}
\Card(\mathcal{Z}_f(r))&=\Card(\mathcal{Z}_g(r))-1,\\
\Card_{\mathbb{Z}}(P^g_*-\om_*(0,r))&=\Card_{\mathbb{Z}}(P_*-\om_*(0,r))+1,\\
\Card_{\mathbb{Z}}(P^g_*+\om_*(0,r))&=\Card_{\mathbb{Z}}(P_*+\om_*(0,r)).
\end{split}
\end{equation}
Since $n\widetilde{\alpha}+m\widetilde{\beta}$ satisfies~\eqref{eq:asump1}, Case~(I) yields
\[
\Card(\mathcal{Z}_{g}(r))=\Card_{\mathbb{Z}}(P^g_*-\om_*(0,r))+\Card_{\mathbb{Z}}(P^g_*+\om_*(0,r)).
\]
The preceding equality with the help 
of~\eqref{eq:choice} 
implies~\eqref{eq:formula}.
Case~B.1 and Case~B.2 complete the proof for the Case~B under~\eqref{eq:asump2}.

The proofs of Case~(II) and Case~(III)
follow step by step from the Case~(I).
The proof of Theorem~\ref{th:bohlharm} is finished.
\end{proof}

\appendix
{
\section{Continuity and trigonometric equations associated to triangles}\label{append}
This section contains useful properties that help us to make this paper more fluid. Since all proofs are straightforward, we left the details to the interested reader.
\begin{lemma}[Continuity]\label{eq:continuity}
Let $\Delta$ be a triangle (it may be degenerate) with side lengths $\ell_1$, $\ell_2$ and $\ell_3$.
Let $\om_1$, $\om_2$ and $\om_3$
be the interior angles opposite to the sides with lengths $\ell_1$, $\ell_2$ and $\ell_3$, respectively. Then $\om_1$, $\om_2$ and $\om_3$ are uniquely determined by  $\ell_1$, $\ell_2$ and $\ell_3$.
Moreover, they are continuous functions of them.
\end{lemma}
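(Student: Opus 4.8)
The plan is to let the law of cosines do all the work and reduce everything to continuity of $\arccos$. Fix side lengths $\ell_1,\ell_2,\ell_3>0$ that form a (possibly degenerate) triangle, so that the triangle inequalities $\ell_i\le \ell_j+\ell_k$ hold for every permutation $(i,j,k)$ of $(1,2,3)$. For each such permutation set
\[
c_i:=\frac{\ell_j^2+\ell_k^2-\ell_i^2}{2\ell_j\ell_k}.
\]
First I would check that $c_i\in[-1,1]$: the inequality $c_i\le 1$ is equivalent to $(\ell_j-\ell_k)^2\le \ell_i^2$, i.e. $|\ell_j-\ell_k|\le \ell_i$, while $c_i\ge -1$ is equivalent to $\ell_i^2\le(\ell_j+\ell_k)^2$, i.e. $\ell_i\le\ell_j+\ell_k$; both hold by the triangle inequalities, with $c_i=\pm1$ exactly in the degenerate cases.

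Next, since the interior angles of a triangle lie in $[0,\pi]$ and $\cos\colon[0,\pi]\to[-1,1]$ is a continuous bijection with continuous inverse $\arccos$, the law of cosines $\ell_i^2=\ell_j^2+\ell_k^2-2\ell_j\ell_k\cos\om_i$ forces $\om_i=\arccos(c_i)$. This already yields the uniqueness claim: the angle opposite the side of length $\ell_i$ is determined by $\ell_1,\ell_2,\ell_3$ through this closed formula (equivalently, this is the SSS congruence criterion). The consistency relation $\om_1+\om_2+\om_3=\pi$ can be verified directly from the three formulas or simply invoked from elementary planar geometry if one also wants existence of a genuine triangle realizing the angles.

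Finally, continuity is immediate from the formula: on the domain $\{\ell_i>0,\ \ell_i\le\ell_j+\ell_k\text{ for all }(i,j,k)\}$ the map $(\ell_1,\ell_2,\ell_3)\mapsto c_i$ is a quotient of polynomials whose denominator $2\ell_j\ell_k$ is strictly positive, hence continuous, and $\arccos$ is continuous on $[-1,1]$; composing, $\om_i=\arccos(c_i)$ is a continuous function of $(\ell_1,\ell_2,\ell_3)$. There is essentially nothing delicate here, so the only step requiring even minor care is the verification that $c_i$ remains in $[-1,1]$ up to and including the degenerate boundary, which is the elementary computation recorded above; everything else is a direct appeal to continuity of rational functions and of $\arccos$.
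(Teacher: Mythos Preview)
Your proof is correct. The paper does not actually give a proof of this lemma: the appendix states that ``since all proofs are straightforward, we left the details to the interested reader,'' so there is nothing to compare against. Your argument via the law of cosines and continuity of $\arccos$ is exactly the standard route one would expect and fills the gap cleanly; the verification that $c_i\in[-1,1]$ up to the degenerate boundary is the only point requiring any care, and you handle it correctly.
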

\begin{lemma}[P.~558 Equation (3) of \cite{Bohl}]\label{triangulo}
Let $\Delta$ be a triangle (it may be degenerate) with side lengths $\ell_1$, $\ell_2$ and $\ell_3$.
Let $\om_1$, $\om_2$ and $\om_3$
be the interior angles opposite to the sides with lengths $\ell_1$, $\ell_2$ and $\ell_3$, respectively. Then the following trigonometric equation holds true
$\ell_1 e^{-\ii \om_2}+\ell_2 e^{\ii \om_1}-\ell_3=0$.
\end{lemma}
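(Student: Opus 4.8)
The plan is to realise $\Delta$ concretely in the complex plane and then obtain the asserted identity as the closure condition for the vector path $A\to C\to B$. First I would fix a labelling of the vertices: write the side of length $\ell_1$ as $BC$, the side of length $\ell_2$ as $CA$, and the side of length $\ell_3$ as $AB$, so that the interior angle at $A$ is $\om_1$, the one at $B$ is $\om_2$, and the one at $C$ is $\om_3$ (consistent with the convention that $\om_i$ is opposite the side of length $\ell_i$). Since a triangle, possibly degenerate, is determined up to orientation-preserving congruence by its three side lengths---this is the uniqueness part of Lemma~\ref{eq:continuity}---I may apply an isometry so that $A=0$, $B=\ell_3$ on the positive real axis, and $C$ in the closed upper half-plane.

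With this normalisation the ray from $A$ to $C$ makes angle $\om_1$ with the positive real axis, so that $C-A=\ell_2 e^{\ii\om_1}$; and the ray from $B$ to $C$ is obtained from the ray from $B$ to $A$ (which points in the negative real direction) by a clockwise rotation through $\om_2$ into the upper half-plane, so that $C-B=\ell_1 e^{\ii(\pi-\om_2)}=-\ell_1 e^{-\ii\om_2}$. Subtracting these two identities gives $B-A=(C-A)-(C-B)=\ell_2 e^{\ii\om_1}+\ell_1 e^{-\ii\om_2}$, while $B-A=\ell_3$ by construction; equating the two expressions for $B-A$ and rearranging yields exactly $\ell_1 e^{-\ii\om_2}+\ell_2 e^{\ii\om_1}-\ell_3=0$.

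It then remains to treat the degenerate case, in which exactly one angle equals $\pi$ and the other two vanish, which I would do by direct substitution: if $\om_3=\pi$ then $\ell_3=\ell_1+\ell_2$ and the left-hand side is $\ell_1+\ell_2-\ell_3=0$; if $\om_1=\pi$ then $\om_2=0$, $\ell_1=\ell_2+\ell_3$ and the left-hand side is $\ell_1-\ell_2-\ell_3=0$; if $\om_2=\pi$ then $\om_1=0$, $\ell_2=\ell_1+\ell_3$ and the left-hand side is $-\ell_1+\ell_2-\ell_3=0$. Alternatively these identities follow from the non-degenerate case by a limiting argument, using the continuity of the angles in the side lengths from Lemma~\ref{eq:continuity}. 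No step here is a genuine obstacle; the only delicate point is to fix the orientation convention (placing $C$ in the upper half-plane, hence rotating \emph{clockwise} at the vertex $B$) consistently throughout, since with the opposite choice one would instead arrive at the complex-conjugate relation $\ell_1 e^{\ii\om_2}+\ell_2 e^{-\ii\om_1}-\ell_3=0$.
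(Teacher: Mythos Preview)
Your argument is correct. The paper does not actually supply a proof of this lemma: the appendix states that ``all proofs are straightforward'' and leaves the details to the reader, so there is nothing to compare against beyond noting that your concrete placement of $\Delta$ in the complex plane and the resulting closure relation $B-A=(C-A)-(C-B)$ is precisely the sort of direct verification the authors have in mind. Your treatment of the degenerate cases and the remark about the orientation convention are also fine.
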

\begin{lemma}[P.~558 Equation (4)-(6) of \cite{Bohl}]\label{prop:congru}
Let $\Delta$ be a triangle (it may be degenerate) with side lengths $\ell_1$, $\ell_2$ and $\ell_3$. Then the real numbers $\phi$ and $\psi$ that are solutions of the complex trigonometric equation
$
\ell_1 e^{\ii\phi}+\ell_2 e^{\ii\psi}+\ell_3=0
$
satisfy the relations
$\phi\equiv \pi-\om_2,\, \psi\equiv \pi+\om_1$
 and 
$\phi\equiv -\pi+\om_2,\, \psi\equiv -\pi-\om_1$,
where $A\equiv B$ means that $A-B= 2\pi k$ for some $k\in \mathbb{Z}$.
\end{lemma}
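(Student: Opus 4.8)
The plan is to prove the two directions of the characterization separately: that each of the two displayed congruence systems produces a solution of $\ell_1 e^{\ii\phi}+\ell_2 e^{\ii\psi}+\ell_3=0$, and conversely that every real pair $(\phi,\psi)$ solving this equation lies in one of these two systems. For the first direction I would invoke Lemma~\ref{triangulo}, which gives $\ell_1 e^{-\ii\om_2}+\ell_2 e^{\ii\om_1}-\ell_3=0$. Multiplying by $e^{\ii\pi}=-1$ yields $\ell_1 e^{\ii(\pi-\om_2)}+\ell_2 e^{\ii(\pi+\om_1)}+\ell_3=0$, so $(\pi-\om_2,\pi+\om_1)$ is a solution; taking the complex conjugate of Lemma~\ref{triangulo} and again multiplying by $-1$ produces $\ell_1 e^{\ii(-\pi+\om_2)}+\ell_2 e^{\ii(-\pi-\om_1)}+\ell_3=0$, so $(-\pi+\om_2,-\pi-\om_1)$ is a solution as well. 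Since $A\equiv B$ means $A-B\in 2\pi\mathbb{Z}$, the whole congruence classes of these two pairs consist of solutions.

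For the converse I would start from a real solution $(\phi,\psi)$ and write $\ell_1 e^{\ii\phi}=-\ell_3-\ell_2 e^{\ii\psi}$. Taking squared moduli gives $\ell_1^2=\ell_2^2+\ell_3^2+2\ell_2\ell_3\cos\psi$, and comparing with the law of cosines for $\Delta$, namely $\ell_1^2=\ell_2^2+\ell_3^2-2\ell_2\ell_3\cos\om_1$, forces $\cos\psi=-\cos\om_1$, hence $\psi\equiv\pi-\om_1$ or $\psi\equiv\pi+\om_1$ modulo $2\pi$. The symmetric manipulation of $\ell_2 e^{\ii\psi}=-\ell_3-\ell_1 e^{\ii\phi}$ gives $\cos\phi=-\cos\om_2$, so $\phi\equiv\pi-\om_2$ or $\phi\equiv\pi+\om_2$. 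This leaves four candidate pairs, and to discard the two crossed ones I would substitute each into the imaginary part of the original equation, $\ell_1\sin\phi+\ell_2\sin\psi=0$, and use the law of sines in the form $\ell_1\sin\om_2=\ell_2\sin\om_1$. For $(\pi-\om_2,\pi+\om_1)$ and for $(\pi+\om_2,\pi-\om_1)$ — the latter being congruent to $(-\pi+\om_2,-\pi-\om_1)$ — the relation reduces to $0=0$, whereas for the remaining two it becomes $\pm 2\ell_1\sin\om_2=0$; since $\sin\om_2>0$ in a non-degenerate triangle these are impossible, leaving exactly the two asserted systems.

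For a degenerate triangle I would argue by direct inspection. A nontrivial degenerate triangle has one interior angle equal to $\pi$ and the other two equal to $0$, and accordingly exactly one of $\ell_1=\ell_2+\ell_3$, $\ell_2=\ell_1+\ell_3$, $\ell_3=\ell_1+\ell_2$ holds. In each of the three cases one reads off from the real part $\ell_1\cos\phi+\ell_2\cos\psi+\ell_3=0$ that $\cos\phi$ and $\cos\psi$ must equal $\pm 1$ with prescribed signs (e.g. if $\ell_3=\ell_1+\ell_2$ then $\ell_1\cos\phi+\ell_2\cos\psi=-(\ell_1+\ell_2)$ forces $\cos\phi=\cos\psi=-1$), hence $\phi$ and $\psi$ are determined modulo $2\pi$, and a short comparison shows these values agree with both congruence systems, which in the degenerate case collapse to a single class.

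I expect the only real friction to be the sign bookkeeping in the converse: making sure that the two possible values of $\phi$ pair up with the two possible values of $\psi$ exactly as claimed rather than getting crossed, and checking that the argument still reads off correctly on degenerate triangles (and, if one wants full generality, on the trivial cases where a side length vanishes, though that never occurs in the applications where $\ell_1,\ell_2,\ell_3>0$). All the ingredients — the law of cosines, the law of sines, $e^{\ii\pi}=-1$, and the fact that the cosine determines an angle up to sign modulo $2\pi$ — are elementary, so apart from this case-checking the proof is routine. A more conceptual alternative would be geometric: the equation says that the displacement vectors $\ell_1 e^{\ii\phi}$, $\ell_2 e^{\ii\psi}$ and $\ell_3$ close up into a triangle with side lengths $\ell_1,\ell_2,\ell_3$, necessarily congruent to $\Delta$, and reading off the interior angles at two of its vertices yields the congruences, the two systems corresponding to the two possible orientations of that triangle.
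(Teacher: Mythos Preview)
Your proposal is correct. The paper itself omits the proof entirely, stating in Appendix~\ref{append} that ``since all proofs are straightforward, we left the details to the interested reader,'' and merely cites p.~558 of Bohl's 1908 paper for the statement. Your argument supplies those details in the natural way: the forward direction via Lemma~\ref{triangulo} (multiplied by $e^{\ii\pi}$ and then conjugated), and the converse via the law of cosines to pin down $\phi$ and $\psi$ up to sign, followed by the law of sines on the imaginary part to rule out the two crossed pairings. The degenerate-case inspection and the geometric alternative you sketch at the end are both sound and standard.
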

}

\section*{Acknowledgments}
G. Barrera would like to express his gratitude to University of Helsinki, Department of Mathematics and Statistics, for all the facilities used along the realization of this work.
All authors are greatly indebted to Mar\'{\i}a F. Delf\'{\i}n Ares de Parga (University of Oregon, Eugene, USA) and Michael A. H\"ogele 
(Universidad de los Andes, Bogot\'a, Colombia) for their support on the translation of \cite{Bohl}.
The authors are grateful to the reviewer for the thorough examination of the paper, which has lead to a significant improvement.

\section*{Declarations}
\subsection*{Funding} 
The research of G. Barrera has been supported by the Academy of Finland, via 
the Matter and Materials Profi4 University Profiling Action, an Academy project (project No. 339228)
and the Finnish Centre of Excellence in Randomness and STructures (project No. 346306). The research of W. Barrera and J. P. Navarrete has been supported by the CONACYT, Proyecto Ciencia de Frontera 2019--21100 via Faculty of Mathematics, UADY.
\subsection*{Conflict of interests} The authors declare that they have no conflict of interest.
\subsection*{Authors' contributions}
All authors have contributed equally to the paper.

\bibliographystyle{amsplain}

\begin{thebibliography}{40}
\addcontentsline{}{}{}
\bibitem{Bohl} 
Bohl, P.: 
Zur theorie der trinomischen gleichungen. 
\textit{Math. Ann.} \textbf{65}, no. 4, (1908), 556--566.

\bibitem{Botta} 
Botta, V., da Silva, J.:
On the behavior of roots of trinomial equations.
\textit{Acta Math. Hungar.} \textbf{157}, no. 1, (2019), 54--62.

\bibitem{BRILLESLYPER} 
Brilleslyper, M., Brooks, J., Dorff, M., Howell, R. \& Schaubroeck, L.: 
Zeros of a one-parameter family of harmonic trinomials.
\textit{Proc. Amer. Math. Soc. Ser. B} \textbf{7}, (2020), 82--90.

\bibitem{BrilleslyperSchaubroeck2018} 
Brilleslyper, M. \& Schaubroeck, L.:
Counting interior roots of trinomials.
\textit{Math. Mag.} \textbf{91}, no. 2, (2018), 142--150.

\bibitem{BrilleslyperSchaubroeck2014} 
Brilleslyper, M. \& Schaubroeck, L.:
Locating unimodular roots.
\textit{College Math. J.} \textbf{45}, no. 3, (2014), 162--168.

\bibitem{Brooks} 
Brooks, J., Dorff, M., Hudson, A., Pitts, E., Whiffen, C. \&  Woodall, A.: 
Zeros of a family of complex-valued harmonic trinomials.
\textit{Bull. Malays. Math. Sci. Soc.} (2022). \url{https://doi.org/10.1007/s40840-021-01230-8}

\bibitem{Bshouty}
Bshouty, D., Hengartner, W. \& Suez, T.: The exact bound on the number of zeros of harmonic polynomials. 
\textit{J. Anal. Math.} \textbf{67}, (1995), 207--218.

\bibitem{Fell} 
Fell, H.:
The geometry of zeros of trinomial equations.
\textit{Rend. Circ. Mat. Palermo} \textbf{29}, no. 2, (1980), 303--336.

\bibitem{Geyer} 
Geyer, L.:
Sharp bounds for the valence of certain harmonic polynomials.
\textit{Proc. Amer. Math. Soc.} \textbf{136}, no. 2, (2008), 549--555.

\bibitem{Hauenstein}
Hauenstein, J., Lerario, A., Lundberg, E. \& Mehta, D.:
Experiments on the zeros of harmonic polynomials using certified counting. 
\textit{Exp. Math.} \textbf{24}, no. 2, (2015), 133--141.

\bibitem{Howell} 
Howell, R. \& Kyle, D.: 
Locating trinomial zeros.
\textit{Involve} \textbf{11}, no. 4, (2018), 711--720.

\bibitem{Khavinson} 
Khavinson, D., Lee, S, \& Saez, A.:
Zeros of harmonic polynomials, critical
lemniscates, and caustics.
\textit{Complex Anal. Synerg.} \textbf{4}, no. 2, (2018), 1--20.

\bibitem{KhavinsonSwiatek} 
Khavinson, D. \&  \'Swia\k{a}tek, G.:
On the number of zeros of certain harmonic polynomials.
\textit{Proc. Amer. Math. Soc.} \textbf{131}, no. 2, (2003), 409--414.

\bibitem{Kuruklis} 
Kuruklis, S.:
The asymptotic stability of $x_{n+1} - ax_n+b x_{n- k}=0$.
\textit{J. Math. Anal. Appl.} \textbf{188}, no. 3, (1994), 719--731.

\bibitem{Lee} 
Lee, S.-Y., Lerario, A. \& Lundberg, E.:
Remarks on Wilmshurst's theorem.
\textit{Indiana Univ. Math. J.} \textbf{64}, no. 4, (2015), 1153--1167.

\bibitem{Mardenbook} 
Marden, M.:
\textit{Geometry of polynomials}.
Amer. Math. Soc.,
Mathematical Surveys and Monographs
\textbf{3}, (1949), 1--243.

\bibitem{Melman} 
Melman, A.:
Geometry of polynomials.
Pacific J. Math. \textbf{259}, no. 1, (2012), 141--159.

\bibitem{Nekrassoff} 
Nekrassoff, P.:
Ueber trinomische Gleichungen.
Math. Ann. \textbf{29}, (1887), no. 3, 413--430.

\bibitem{Szabo}
Szab\'o, P.:
On the roots of the trinomial equation. 
\textit{Cent. Eur. J. Oper. Res.} \textbf{18}, no. 1, (2010), 97--104.
 
\bibitem{Wilmshurst} 
Wilmshurst, A.: 
The valence of harmonic polynomials.
\textit{Proc. Amer. Math. Soc.} \textbf{126}, no. 7, (1998), 2077--2081.

\end{thebibliography}

\end{document}